\documentclass[reqno,11pt,twoside]{article}
\usepackage[hmargin=3cm, vmargin=1.in, marginparwidth=2.6cm, marginparsep=0.3cm, a4paper, centering]{geometry}
\usepackage[T1]{fontenc}
\usepackage[lining]{ebgaramond}
\usepackage{amsmath,amsthm}
\usepackage[ebgaramond]{newtxmath}
\usepackage[scr=rsfs]{mathalpha}
\usepackage{enumerate}
\usepackage[font=footnotesize, labelfont=bf, labelsep=colon, margin=0.5in]{caption}
\usepackage{graphicx}

\usepackage{xcolor}
\renewcommand\footnotemark{}
\usepackage[nobottomtitles,pagestyles]{titlesec}
\titleformat{\section}
{\normalfont\large\bfseries}
{\filcenter\S\thesection.}{1ex}{\filcenter}
\titleformat{\subsection}
{\normalfont\bfseries}
{\filcenter\S\thesubsection.}{1ex}{\filcenter}
\usepackage[colorlinks,allcolors=blue,pagebackref]{hyperref}
\usepackage{pifont}
\renewcommand*{\backrefalt}[4]{%
\ifcase #1 %
\textcolor{red}{No citations}%
\or
\ding{43}~p.~#2%
\else
\ding{43}~pp.~#2%
\fi}
\usepackage{breakcites}
\newcommand{\N}{\mathcal N}

\newcommand{\R}{{\mathbb R}}

\DeclareMathOperator*{\Spec}{Spec}
\newcommand{\DtN}{{\mathcal D}}
\newcommand{\myscal}[1]{\left(#1\right)}

\newcommand{\Dom}{\mathrm{Dom}}
\newcommand{\Dir}{\mathrm{Dir}}
\newcommand{\Neu}{\mathrm{Neu}}
\newcommand{\Rob}{\mathrm{Rob}}

\newcommand{\dr}{\mathrm{d}}
\newcommand{\er}{\mathrm{e}}
\newcommand{\ir}{\mathrm{i}}

\renewcommand{\tilde}{\widetilde}
\newcommand{\dist}{\operatorname{dist}}
\numberwithin{equation}{section}
\usepackage{tcolorbox,varwidth}
\tcbuselibrary{skins, breakable, theorems}
\colorlet{thmcolor}{orange!80!white}
\colorlet{remcolor}{teal!50!white}
\colorlet{defncolor}{blue!50!black}
\usepackage{keytheorems}
\tcbset{commonstyle/.style={%
colback=white!95!tcbcolframe,
colbacktitle=white!80!tcbcolframe,
arc=2mm,
fonttitle=\bfseries,
coltitle=black,
enhanced,
varwidth boxed title*=-2cm,
attach boxed title to top left={yshift=-3mm,xshift=0.5cm,yshifttext=-1mm},
description font=\mdseries,
breakable,
before upper={\parindent15pt\noindent},
beforeafter skip balanced=10.0pt plus 1.0pt minus 1.0pt,
separator sign={\textbf{: }},
overlay first={\draw[color=tcbcolframe, line width=.5pt] (frame.south west)--(frame.south east);},
overlay middle={\draw[color=tcbcolframe, line width=.5pt] (frame.south west)--(frame.south east);\draw[color=tcbcolframe, line width=.5pt] (frame.north west)--(frame.north east);},
overlay last={\draw[color=tcbcolframe, line width=.5pt] (frame.north west)--(frame.north east);}
}
}
\newkeytheoremstyle{mythmstyle}{
bodyfont=\normalfont,
headpunct={},
notebraces={}{},
noteseparator={: }, 
notefont=\bfseries,
tcolorbox = {colframe=thmcolor,commonstyle}
}
\newkeytheoremstyle{myremstyle}{
bodyfont=\normalfont,
headpunct={},
notebraces={}{},
noteseparator={: }, 
notefont=\bfseries,
tcolorbox = {colframe=remcolor,commonstyle}
}
\newkeytheoremstyle{mydefstyle}{
bodyfont=\normalfont,
headpunct={},
notebraces={}{},
noteseparator={: }, 
notefont=\bfseries,
tcolorbox = {colframe=defncolor,commonstyle}
}
\newkeytheorem{theorem}[
name=Theorem,
style=mythmstyle,
parent=section
]
\newkeytheorem{lemma}[
name=Lemma,
style=mythmstyle,
sibling=theorem
]
\newkeytheorem{prop}[
name=Proposition,
style=mythmstyle,
sibling=theorem
]
\newkeytheorem{corollary}[
name=Corollary,
style=mythmstyle,
sibling=theorem
]
\newkeytheorem{remark}[
name=Remark,
sibling=theorem,
style=myremstyle
]
\newkeytheorem{definition}[
name=Definition,
style=mydefstyle,
sibling=theorem
]
\newkeytheorem{namedconj}[
name=Conjecture,
style=mythmstyle
]
 
\let\originalleft\left
\let\originalright\right
\renewcommand{\left}{\mathopen{}\mathclose\bgroup\originalleft}
\renewcommand{\right}{\aftergroup\egroup\originalright}
\newcommand{\mydoi}[1]{\href{https://doi.org/#1}{doi: #1}}
\newcommand{\myarXiv}[1]{\href{https://arxiv.org/abs/#1}{arXiv: #1}}
\title{Comparison inequalities for Dirichlet-to-Neumann maps%
\footnote{{\bf MSC2020: }  35P15, 35J05, 47A75, 58J50}%
\footnote{{\bf Keywords: } Dirichlet-to-Neumann map, Steklov problem, Laplacian, eigenvalues, spectral geometry}
}
\date{\footnotesize arXiv v.1; 19 August 2026}
\author{
Denis S. Grebenkov\thanks{\textbf{D. S. G.}: Laboratoire de Physique de la Mati\`ere Condens\'ee, CNRS -- \'Ecole Polytechnique, Institut Polytechnique de Paris, 91120 Palaiseau, France; \href{mailto:denis.grebenkov@polytechnique.edu}{denis.grebenkov@polytechnique.edu}; \url{https://pmc.polytechnique.fr/pagesperso/dg/}; ORCID: 0000-0002-6273-9164%
}
\and 
Michael Levitin\thanks{%
\textbf{M. L.}: Department of Mathematics and Statistics, University of Reading, 
Pepper Lane, Whiteknights, Reading RG6 6AX, UK;
\href{mailto:M.Levitin@reading.ac.uk}{M.Levitin@reading.ac.uk}; \url{https://www.michaellevitin.net}; ORCID: 0000-0003-0020-3265%
}
\and
Karl-Mikael Perfekt\thanks{%
\textbf{K.-M. P.}:  Department of Mathematical Sciences, Norwegian University of Science and Technology (NTNU), 7491 Trondheim, Norway;
\href{mailto:karl-mikael.perfekt@ntnu.no}{\nolinkurl{karl-mikael.perfekt@ntnu.no}}; \url{https://kmperfekt.com}; ORCID: 0000-0003-4574-9683%
}
\and
Iosif Polterovich\thanks{%
\textbf{I. P.}: D\'e\-par\-te\-ment de math\'ematiques et de statistique, Univer\-sit\'e de Mont\-r\'eal, 
CP 6128 succ Centre-Ville, Mont\-r\'eal QC  H3C 3J7, Canada;
\href{mailto:iosif.polterovich@umontreal.ca}{\nolinkurl{iosif.polterovich@umontreal.ca}}; \url{https://www.dms.umontreal.ca/\~iossif}; ORCID: 0009-0007-0052-6589%
}
}
\begin{document}
\maketitle
\begin{abstract}
We prove comparison inequalities for Dirichlet-to-Neumann maps corresponding to different non-positive Helmholtz parameters. For convex domains our bounds are sharp, and the resulting eigenvalue inequalities partially confirm an earlier conjecture, which we show does not hold in full generality. We further obtain geometry-dependent versions for arbitrary sufficiently regular domains, together with extensions to compact Riemannian manifolds with boundary. We also discuss analogous questions for metric graphs.
\end{abstract}
{\small \tableofcontents}
\section{Introduction}\label{sec:intro}
\subsection{Statement of the problems}
Let $\Omega\subset\mathbb{R}^d$, $d\ge 2$, be a bounded domain with Lipschitz boundary. For $\Lambda\in\mathbb{R}\setminus\Spec\left(-\Delta^\Dir_\Omega\right)$ and $u\in H^{1/2}(\partial\Omega)$, let $U=\mathcal{E}_\Lambda u\in H^1(\Omega)$ be a $\Lambda$-harmonic extension of $u$, that is, the unique  weak solution of 
\[
\begin{cases}
-\Delta U - \Lambda U=0\qquad&\text{in }\Omega,\\
U|_{\partial\Omega} = u.\qquad&
\end{cases}
\]
We define the \emph{Dirichlet-to-Neumann map} $\DtN_\Lambda:H^{1/2}(\partial\Omega)\to H^{-1/2}(\partial\Omega)$ as $\DtN_\Lambda: u\mapsto \partial_n \mathcal{E}_\Lambda u$, where $\partial_n$ is the normal derivative on the boundary, understood in the sense of the  form
\begin{equation}\label{eq:quadDtN}
\begin{gathered}
\mathfrak{d}_\Lambda[u,v]:=\myscal{\DtN_\Lambda u, v}_{L^2(\partial\Omega)}=\myscal{\nabla \mathcal{E}_\Lambda u, \nabla \mathcal{E}_\Lambda v}_{L^2(\Omega)}-\Lambda \myscal{\mathcal{E}_\Lambda u, \mathcal{E}_\Lambda v}_{L^2(\Omega)},\\
\mathfrak{d}_\Lambda[u]:= \mathfrak{d}_\Lambda[u,u], \qquad
u, v\in H^{1/2}(\partial\Omega).
\end{gathered}
\end{equation}
The Dirichlet-to-Neumann map $\DtN_\Lambda$ is  a self-adjoint operator in $L^2(\partial\Omega)$ with a discrete spectrum; we denote its eigenvalues, enumerated in non-decreasing order with account of multiplicities,
\[
\sigma_1^{(\Lambda)}\le \sigma_2^{(\Lambda)}\le \dots \to +\infty.
\]
By restricting its domain to functions orthogonal to the normal derivatives of the elements of the corresponding Dirichlet Laplacian eigenspace, the definition of $\DtN_\Lambda$ is naturally extended to $\Lambda\in\Spec\left(-\Delta^\Dir_\Omega\right)$.

If $\Lambda$ is below the first Dirichlet Laplacian eigenvalue of $\Omega$, $\Lambda<\lambda_1^\Dir(\Omega)$ (and, in particular, if $\Lambda\le 0$), then for any $u\in H^{1/2}(\partial\Omega)$
\begin{equation}\label{eq:DirPr}
\mathfrak{d}_\Lambda[u] = \min_{\substack{W\in H^1(\Omega)\\W|_{\partial\Omega}=u}} \left(\|\nabla W\|^2_{L^2(\Omega)}-\Lambda\|W\|^2_{L^2(\Omega)}\right),
\end{equation}
with the minimum attained at $W=\mathcal{E}_\Lambda u$, cf. \cite[Proposition 2.5]{GLP26}. Hence, the eigenvalues
$\sigma_k^{(\Lambda)}$ obey the variational principle
\begin{equation} \label{eq:var_pr}
\sigma_k^{(\Lambda)} = \min_{\substack{\mathcal{L} \subset H^1(\Omega)\\\dim\mathcal{L} = k}}\  \max_{W \in \mathcal{L}\setminus\{0\}} 
\frac{\|\nabla W\|^2_{L^2(\Omega)} - \Lambda \|W\|^2_{L^2(\Omega)}}{\|W|_{\partial\Omega}\|^2_{L^2(\partial\Omega)}},  \qquad k\in\mathbb{N}. 
\end{equation}
For $\Lambda>\lambda_1^\Dir(\Omega)$, the space $H^1(\Omega)$ in the variational principle \eqref{eq:var_pr} should be replaced by the subspace of all $\Lambda$-harmonic functions from $H^1(\Omega)$.

For $\Lambda\le 0$, the operator $\DtN_\Lambda$ is non-negative, with $\sigma_1^{(0)}=0$ and $\sigma_1^{(\Lambda)}>0$ for $\Lambda<0$. 

On any interval of the real line non-containing the eigenvalues of the Dirichlet Laplacian $-\Delta_\Omega^\Dir$, each eigenvalue $\sigma_k^{(\Lambda)}$, $k\in \mathbb{N}$, is a strictly monotone decreasing function of $\Lambda$. Moreover, the union of the spectra of $\DtN_\Lambda$ on any such interval can be decomposed, at the cost of forfeiting  the ordering of eigenvalues, into a family of real-analytic monotone decreasing curves $\sigma^{(\Lambda)}$. 
For further details we refer to \cite{Arendt12, Behrndt15, GLP26}.

The following conjecture about the eigenvalues of $\DtN_\Lambda$ for $\Lambda\le 0$ was made in the first  arXiv version of \cite{GLP26}.

\begin{namedconj}\label{conj:A}
For any bounded Lipschitz domain $\Omega\subset\mathbb{R}^d$, any $\Lambda\le  0$, and any fixed  real-analytic branch $\sigma^{(\Lambda)}$ 
\[
\sigma^{(\Lambda)}-\sigma^{(0)}\le \sqrt{-\Lambda}.
\]
\end{namedconj}

A weaker variant of Conjecture \ref{conj:A} is 

\begin{namedconj}\label{conj:B}
For any bounded Lipschitz domain $\Omega\subset\mathbb{R}^d$, any $\Lambda\le  0$, and any $k\in\mathbb{N}$, 
\begin{equation}\label{eq:conjB}
\sigma_k^{(\Lambda)}-\sigma_k^{(0)}\le \sqrt{-\Lambda}.
\end{equation}
\end{namedconj}

\begin{remark} Conjecture \ref{conj:B} can be restated in terms of the eigenvalue counting function (see  Definition \ref{defn:counting}) of $\DtN_\Lambda$ as the fact that 
\[
\mathcal{N}^{\DtN_\Lambda}\left(\sqrt{-\Lambda}+\sigma_k^{(0)}\right) \ge k, 
\]
holds for all $k\in\mathbb{N}$ and $\Lambda\le 0$.
\end{remark}

There are various sources of motivation for Conjectures \ref{conj:A} and \ref{conj:B}. They are known to be true, in full generality, for disks and balls \cite{GrCh}.  The validity of Conjecture \ref{conj:B} for the principal eigenvalue $\sigma_1^{(\Lambda)}$ is well-known for an arbitrary domain, see  \cite[Theorem 4.17]{GLP26} and related results \cite[Theorem 2.3]{GioSmi}, \cite[Lemma 2.1]{DanersKennedy}, and \cite[Proposition 4.12]{Bucur17}.
Conjectures \ref{conj:A} and \ref{conj:B} have been confirmed by some (limited) numerical experiments, see Figure \ref{fig:rect}. Another uniform bound hinting at a possible $\sqrt{-\Lambda}$ behaviour of eigenvalues of the Dirichlet-to-Neumann map has been proved in \cite{Girouard22}: if a domain $\Omega \subset\R^d$ has a sufficiently smooth boundary, then, 
with some constant $C > 0$ depending only on $\Omega$, and with $0=\nu_1<\nu_2\le\dots$, denoting the eigenvalues of the Laplace--Beltrami operator $-\Delta_{\partial\Omega}$, the
bounds
\[
\left|\sigma_k^{(\Lambda)} - \sqrt{- \Lambda+\nu_k} \right| \le  C,
\]
hold uniformly over all $\Lambda \in (-\infty,0]$ and $k\in \mathbb{N}$. Finally, for smooth domains and polygons, \eqref{eq:conjB} complies, in the  leading term, with the asymptotic behaviour of $\sigma_k^{(\Lambda)}$ as $\Lambda\to-\infty$, see \cite[\S5.2]{GLP26} and references therein.

\begin{figure}[htb]
\centering
\includegraphics[width=0.8\textwidth]{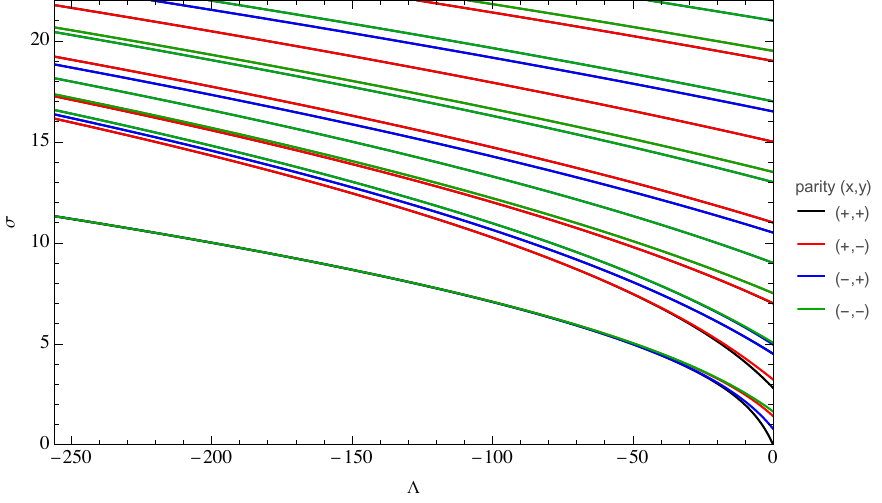}
\caption{The analytic eigenvalue curves $\sigma^{(\Lambda)}$, $\Lambda\le 0$, for the rectangle $\left(-\frac{\pi}{4},-\frac{\pi}{4}\right)\times \left(-\frac{\pi}{6},-\frac{\pi}{6}\right)$.  A curve colour reflects the parity of the corresponding bulk eigenfunction with respect to the central line in the given direction: $+$ is symmetric, $-$ is antisymmetric. For an algorithm and theoretical justification, see e.g. \cite{GLP26}.}\label{fig:rect}
\end{figure}

\subsection{Statements of the main results}

Our main result is the following quadratic-form comparison for \emph{convex} domains.  Recall that all forms $\mathfrak d_\Lambda$ have the common form domain $H^{1/2}(\partial\Omega)$ for $\Lambda\leq0$.

\begin{theorem}\label{thm:convex}
Let $\Omega\subset\R^d$ be a bounded convex domain and let $\Lambda_1\leq\Lambda_2\leq0$. Then
\begin{equation}\label{eq:formHolder}
0\leq \DtN_{\Lambda_1}-\DtN_{\Lambda_2}\leq \sqrt{\Lambda_2-\Lambda_1}\,I
\end{equation}
in the sense of quadratic forms. Equivalently, for every $u\in H^{1/2}(\partial\Omega)$,
\[
0\leq \mathfrak d_{\Lambda_1}[u]-\mathfrak d_{\Lambda_2}[u]\leq \sqrt{\Lambda_2-\Lambda_1}\,\|u\|^2_{L^2(\partial\Omega)}.
\]
\end{theorem}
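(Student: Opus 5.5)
The lower bound in \eqref{eq:formHolder} follows from the Dirichlet principle \eqref{eq:DirPr}. For every admissible $W$ with $W|_{\partial\Omega}=u$, the functional $\|\nabla W\|^2-\Lambda\|W\|^2$ is non-increasing in $\Lambda$. Taking the minimum over $W$ gives $\mathfrak d_{\Lambda_1}[u]\ge\mathfrak d_{\Lambda_2}[u]$ for $\Lambda_1\le\Lambda_2\le 0$.

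For the upper bound I would write $\Lambda=-\kappa^2$ with $\kappa\ge0$ and study $f(\kappa):=\mathfrak d_{-\kappa^2}[u]$ for fixed $u$. By the Hadamard-type formula $\partial_\Lambda \mathfrak d_\Lambda[u]=-\|\mathcal E_\Lambda u\|^2_{L^2(\Omega)}$, which follows from \eqref{eq:DirPr} and the envelope principle, we get $f'(\kappa)=2\kappa\|\mathcal E_{-\kappa^2}u\|^2_{L^2(\Omega)}$. The elementary inequality $\sqrt{a}-\sqrt{b}\le\sqrt{a-b}$ for $a\ge b\ge0$ reduces \eqref{eq:formHolder} to the Lipschitz bound $f(\kappa_1)-f(\kappa_2)\le(\kappa_1-\kappa_2)\|u\|^2_{L^2(\partial\Omega)}$. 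Equivalently, it suffices to prove the key estimate
\[
2\kappa\,\|\mathcal E_{-\kappa^2}u\|^2_{L^2(\Omega)}\le \|u\|^2_{L^2(\partial\Omega)},\qquad \kappa>0.
\]
This estimate is sharp for the half-space profile $U=\er^{-\kappa x}$, which suggests that the bound is exactly of the right form.

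To prove the key estimate I would use convexity through the distance function $\rho(x)=\dist(x,\partial\Omega)$. On a convex domain $\rho$ is concave, so $|\nabla\rho|=1$ a.e., $\Delta\rho\le0$ in the sense of distributions, and $\nabla\rho=-n$ on $\partial\Omega$. Set $U=\mathcal E_{-\kappa^2}u$ and integrate $\operatorname{div}(U^2\nabla\rho)=2U\partial_\rho U+U^2\Delta\rho$. Since $U^2\Delta\rho\le 0$, this gives
\[
\|u\|^2_{L^2(\partial\Omega)}\ \ge\ -2\int_\Omega U\,\partial_\rho U .
\]
It then remains to show $-2\int_\Omega U\partial_\rho U\ge 2\kappa\int_\Omega U^2$, using $\Delta U=\kappa^2U$. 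I would try to get this from a weighted version of the same identity, namely the divergence of $\er^{-2h\rho}U\nabla U$ and of $\er^{-2h\rho}U^2\nabla\rho$. If that version cannot be closed, I would use a direct test-function argument instead. Take $W=U\er^{-h\rho}$ as a competitor in \eqref{eq:DirPr} at parameter $\kappa+h$. The identity $\operatorname{div}(\er^{-2h\rho}U\nabla U)=\er^{-2h\rho}\bigl(|\nabla U|^2+\kappa^2U^2\bigr)-2h\er^{-2h\rho}U\partial_\rho U$ makes the cross terms cancel exactly, and yields
\[
f(\kappa+h)\le f(\kappa)+2h(\kappa+h)\int_\Omega \er^{-2h\rho}U^2 .
\]
The concavity inequality $\operatorname{div}(\er^{-2h\rho}U^2\nabla\rho)\le \er^{-2h\rho}\bigl(2U\partial_\rho U-2hU^2\bigr)$ then has to be used to bound the last term by $h\|u\|^2_{L^2(\partial\Omega)}+o(h)$.

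The main obstacle is this last step: producing the sharp weighted $L^2$ bound $2\kappa\int U^2\le\int_{\partial\Omega}u^2$ from the superharmonicity of $\rho$. The first-order identities leave a term $\int U\partial_\rho U$ whose sign is not obviously controlled. If the direct vector-field calculus does not close, my first fallback is a comparison argument. By Kato's inequality $|U|$ is a subsolution of $-\Delta+\kappa^2$. One would then compare with a one-dimensional supersolution built from $\er^{-\kappa\rho}$, which is a supersolution of $(-\Delta+\kappa^2)v\ge0$ precisely because $\Delta\rho\le0$. Combined with duality, writing $\int U^2=\int_{\partial\Omega}u\,\partial_n\Phi$ with $(-\Delta+\kappa^2)\Phi=U$ and $\Phi|_{\partial\Omega}=0$, this should reduce the estimate to a one-dimensional computation along normals.
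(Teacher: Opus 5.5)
\textbf{There is a genuine gap.} Your lower bound is fine. The upper-bound strategy, however, aims at a statement that is false.

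The ``key estimate'' $2\kappa\|\mathcal E_{-\kappa^2}u\|^2_{L^2(\Omega)}\le\|u\|^2_{L^2(\partial\Omega)}$ is equivalent to $\kappa\mapsto\mathfrak d_{-\kappa^2}[u]$ being $1$-Lipschitz. It already fails for the unit disk. Take $\Omega=\mathbb{D}$ and $u\equiv1$, so that $\mathcal E_{-\kappa^2}1=I_0(\kappa r)/I_0(\kappa)$. The Lommel integral $\int_0^1 I_0(\kappa r)^2\,r\,\dr r=\tfrac12\bigl(I_0(\kappa)^2-I_1(\kappa)^2\bigr)$ gives
\[
2\kappa\,\|\mathcal E_{-\kappa^2}1\|^2_{L^2(\mathbb{D})}=2\pi\,\kappa\bigl(1-y(\kappa)^2\bigr),\qquad y:=\frac{I_1}{I_0}.
\]
Here $\kappa(1-y^2)$ is exactly $\frac{\dr}{\dr\kappa}\bigl(\kappa I_1(\kappa)/I_0(\kappa)\bigr)=\frac{\dr}{\dr\kappa}\sigma_1^{(-\kappa^2)}(\mathbb{D})$. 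Numerically, $2\bigl(1-y(2)^2\bigr)\approx1.026>1$. Asymptotically, $\kappa(1-y^2)=1+\frac{1}{8\kappa^2}+O(\kappa^{-3})$, so the estimate fails for all large $\kappa$. Concretely, $\sigma_1^{(-9)}(\mathbb{D})-\sigma_1^{(-4)}(\mathbb{D})\approx1.034>3-2$. The theorem itself survives because $\sqrt{9-4}\approx2.24$.

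The half-space equality case misleads you here: boundary curvature enters the $\kappa$-derivative at order $\kappa^{-2}$, with the wrong sign. So the step you flagged as the main obstacle cannot be closed by weighted vector-field identities, nor by the Kato/comparison fallback, because its target is untrue. The theorem is genuinely a $\frac12$-H\"older statement in $\Lambda$. Its slack lives in the fact that $\sqrt{\Lambda_2-\Lambda_1}\gg\kappa_1-\kappa_2$ whenever $\kappa_1-\kappa_2\ll\kappa_2$, and the reduction via $\sqrt a-\sqrt b\le\sqrt{a-b}$ throws exactly that slack away.

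\textbf{The missing idea is the choice of decay rate;} your test-function computation is otherwise the right one. Do not differentiate. Take $U=\mathcal E_{\Lambda_2}u$ and $V=\er^{-\alpha\rho}U$ with $\alpha=\sqrt{\Lambda_2-\Lambda_1}$, not $\alpha=\kappa_1-\kappa_2$, and use $V$ as a competitor in the Dirichlet principle at $\Lambda_1$.

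Expanding $|\nabla V|^2$ and applying your concavity inequality for $\operatorname{div}(\er^{-2\alpha\rho}U^2\nabla\rho)$ gives $\|\nabla V\|^2+\alpha^2\|V\|^2\le\|\nabla U\|^2+\alpha\|u\|^2_{L^2(\partial\Omega)}$. Since $\kappa_1^2=\kappa_2^2+\alpha^2$, the term $\alpha^2\|V\|^2$ absorbs the whole parameter increment. Then $|V|\le|U|$ gives $\kappa_2^2\|V\|^2\le\kappa_2^2\|U\|^2$, and you obtain $\mathfrak d_{\Lambda_1}[u]\le\mathfrak d_{\Lambda_2}[u]+\sqrt{\Lambda_2-\Lambda_1}\,\|u\|^2_{L^2(\partial\Omega)}$. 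This is the paper's proof.

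With $\alpha=h$ instead, the same computation, including the concavity step, only yields $f(\kappa+h)\le f(\kappa)+h\|u\|^2_{L^2(\partial\Omega)}+2\kappa h\int_\Omega\er^{-2h\rho}U^2$. The disk example shows the last term cannot be discarded.
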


Thus, for every bounded convex domain, the family $\Lambda\mapsto\DtN_\Lambda$ is $\frac12$-H\"older continuous on $(-\infty,0]$ with the H\"older constant one (which is sharp, see Remark \ref{rem:constantsharp}) in the sense that the form difference $\DtN_{\Lambda_1}-\DtN_{\Lambda_2}$ extends to a bounded operator on $L^2(\partial\Omega)$ satisfying the corresponding norm estimate.

The eigenvalue statement follows immediately from the min--max principle.

\begin{corollary}\label{cor:convex-eigenvalues}
Under the assumptions of Theorem~\ref{thm:convex}, for every $k\in\mathbb N$,
\begin{equation}\label{eq:Holder}
0\leq \sigma_k^{(\Lambda_1)}-\sigma_k^{(\Lambda_2)} \leq \sqrt{\Lambda_2-\Lambda_1}.
\end{equation}
In particular, taking $\Lambda_2=0$ gives \eqref{eq:conjB}.
\end{corollary}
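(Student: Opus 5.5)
We deduce the corollary from the quadratic-form inequality \eqref{eq:formHolder} of Theorem~\ref{thm:convex} via the min--max characterisation of the eigenvalues of $\DtN_\Lambda$. For $\Lambda\le 0$ each $\DtN_\Lambda$ is the self-adjoint operator in $L^2(\partial\Omega)$ associated with the closed, lower semibounded form $\mathfrak d_\Lambda$ on the common form domain $H^{1/2}(\partial\Omega)$. Hence
\[
\sigma_k^{(\Lambda)}=\min_{\substack{\mathcal{M}\subset H^{1/2}(\partial\Omega)\\ \dim\mathcal{M}=k}}\ \max_{u\in\mathcal{M}\setminus\{0\}}\frac{\mathfrak d_\Lambda[u]}{\|u\|^2_{L^2(\partial\Omega)}}.
\]
This is equivalent to \eqref{eq:var_pr} after composing with the extension $\mathcal{E}_\Lambda$ and using \eqref{eq:DirPr}. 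The key point is that the same family of test spaces $\mathcal{M}$ can be used for every $\Lambda\le 0$.

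\textbf{Lower bound.} By Theorem~\ref{thm:convex}, $\mathfrak d_{\Lambda_2}[u]\le \mathfrak d_{\Lambda_1}[u]$ for all $u\in H^{1/2}(\partial\Omega)$. Dividing by $\|u\|^2_{L^2(\partial\Omega)}$ and taking the maximum over $\mathcal{M}$ and then the minimum over $\mathcal{M}$ preserves this inequality, so $\sigma_k^{(\Lambda_2)}\le\sigma_k^{(\Lambda_1)}$.

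\textbf{Upper bound.} Again by Theorem~\ref{thm:convex},
\[
\frac{\mathfrak d_{\Lambda_1}[u]}{\|u\|^2_{L^2(\partial\Omega)}}\le \frac{\mathfrak d_{\Lambda_2}[u]}{\|u\|^2_{L^2(\partial\Omega)}}+\sqrt{\Lambda_2-\Lambda_1}
\]
for all $u\neq0$. The additive constant passes through both the maximum and the minimum. Choosing $\mathcal{M}$ to be a minimiser for $\sigma_k^{(\Lambda_2)}$, for instance the span of its first $k$ eigenfunctions, gives $\sigma_k^{(\Lambda_1)}\le\sigma_k^{(\Lambda_2)}+\sqrt{\Lambda_2-\Lambda_1}$. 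Together with the lower bound this is \eqref{eq:Holder}.

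\textbf{Conjecture~\ref{conj:B}.} Setting $\Lambda_1=\Lambda$ and $\Lambda_2=0$ in \eqref{eq:Holder} yields \eqref{eq:conjB} directly.

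There is no real obstacle here, since all the analysis lies in Theorem~\ref{thm:convex}. The only point requiring care is that the min--max must be formulated over the common form domain $H^{1/2}(\partial\Omega)$, rather than over $H^1(\Omega)$ with a $\Lambda$-dependent functional. This is legitimate because $\Lambda\le0<\lambda_1^\Dir(\Omega)$, so each $\mathfrak d_\Lambda$ is a genuine closed form on that domain.
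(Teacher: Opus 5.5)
Your proposal is correct and follows the paper's own route: the eigenvalue inequalities are read off from the quadratic-form inequality of Theorem~\ref{thm:convex} via the min--max principle over the common form domain $H^{1/2}(\partial\Omega)$. The paper states this in a single line, and your write-up supplies the routine details.
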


The sharp H\"older constant one in the right-hand sides of \eqref{eq:formHolder} and \eqref{eq:Holder} is special for convex domains.  For a general smooth domain one still has a similar estimate but with a domain-dependent constant.  To state it, suppose that $\partial\Omega$ is of class $C^2$, and let $\mathcal H(x)$ denote the mean curvature of $\partial\Omega$ with respect to the outward unit normal, with the convention that $\mathcal H$ is the sum of the principal curvatures. Set
\begin{equation}\label{eq:KOmega}
\mathcal{K}_\Omega:=\|\mathcal H_-\|_{L^\infty(\partial\Omega)},\qquad \mathcal H_-:=\max\{-\mathcal H,0\},
\end{equation}
and
\begin{equation}\label{eq:AOmega}
\mathcal{A}_\Omega:=\sup_{0\neq u\in H^{1/2}(\partial\Omega)}
\frac{\|\mathcal E_0u\|^2_{L^2(\Omega)}}{\|u\|^2_{L^2(\partial\Omega)}}.
\end{equation}
The constant $\mathcal{A}_\Omega$ is finite, see also Remark \ref{rem:AOmega}. 

\begin{theorem}\label{thm:C2}
Let $\Omega\subset\R^d$ be a bounded domain with $C^2$ boundary. For any $\Lambda_1\leq\Lambda_2\leq0$,
\begin{equation}\label{eq:formHolderC2}
0\leq \DtN_{\Lambda_1}-\DtN_{\Lambda_2}\leq \mathcal{C}_\Omega\sqrt{\Lambda_2-\Lambda_1}\,I
\end{equation}
in the sense of quadratic forms, where one may take
\begin{equation}\label{eq:COmega}
\mathcal{C}_\Omega:=1+\mathcal{K}_\Omega \mathcal{A}_\Omega.
\end{equation}
\end{theorem}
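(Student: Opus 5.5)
The lower bound in \eqref{eq:formHolderC2} comes directly from the Dirichlet principle \eqref{eq:DirPr}. For a fixed competitor $W$, the functional $\|\nabla W\|^2-\Lambda\|W\|^2$ is non-increasing in $\Lambda$. Taking the minimum over $W$ with $W|_{\partial\Omega}=u$ then gives $\mathfrak d_{\Lambda_1}[u]\ge\mathfrak d_{\Lambda_2}[u]$.

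For the upper bound I would argue through the $\Lambda$-derivative of the form. For $\Lambda\le 0$, differentiating the minimisation \eqref{eq:DirPr} (a Hellmann--Feynman type argument, using that the minimiser $\mathcal E_\Lambda u$ depends analytically on $\Lambda$) should give
\[
\frac{\dr}{\dr\Lambda}\,\mathfrak d_\Lambda[u]=-\|\mathcal E_\Lambda u\|^2_{L^2(\Omega)},
\qquad\text{so}\qquad
\mathfrak d_{\Lambda_1}[u]-\mathfrak d_{\Lambda_2}[u]=\int_{\Lambda_1}^{\Lambda_2}\|\mathcal E_\Lambda u\|^2_{L^2(\Omega)}\,\dr\Lambda .
\]
Since $\int_{\Lambda_1}^{\Lambda_2}\frac{\dr\Lambda}{2\sqrt{-\Lambda}}=\sqrt{-\Lambda_1}-\sqrt{-\Lambda_2}\le\sqrt{\Lambda_2-\Lambda_1}$, it then suffices to prove the pointwise-in-$\Lambda$ estimate
\begin{equation*}
\|\mathcal E_\Lambda u\|^2_{L^2(\Omega)}\le \frac{\mathcal C_\Omega}{2\sqrt{-\Lambda}}\,\|u\|^2_{L^2(\partial\Omega)},\qquad \Lambda<0 .
\end{equation*}
For convex domains, with $\mathcal K_\Omega=0$, this is exactly what Theorem~\ref{thm:convex} would need.

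To prove this estimate, write $\kappa=\sqrt{-\Lambda}$ and $U=\mathcal E_\Lambda u$, so that $\Delta U=\kappa^2U$. I would apply the divergence theorem to $U^2X$ for a vector field $X$ with $|X|\le 1$ and $X\cdot n=1$ on $\partial\Omega$. The natural choice is $X=-\nabla\rho$ with $\rho=\dist(\cdot,\partial\Omega)$, suitably cut off or regularised away from a collar of $\partial\Omega$, where $\rho$ is $C^2$ for $C^2$ boundary. This yields
\[
\|u\|^2_{L^2(\partial\Omega)}=\int_\Omega U^2\,\mathrm{div}X+2\int_\Omega U\,\nabla U\cdot X .
\]
On the boundary, $\mathrm{div}X=\mathcal H$, and near the boundary $\mathrm{div}X\ge-\mathcal K_\Omega$ up to the cutoff. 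The term $2\int U\nabla U\cdot X$ has to be paired with the energy identity $\|\nabla U\|^2+\kappa^2\|U\|^2=\mathfrak d_\Lambda[u]$, or better with a weighted version obtained by testing the equation against $e^{\pm\kappa\rho}U$. The aim is a one-dimensional comparison: the profile $e^{-\kappa\rho}$ satisfies $\int_0^\infty e^{-2\kappa t}\,\dr t=\frac1{2\kappa}$, and that is exactly the bound wanted.

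In fact, for convex $\Omega$ the function $\rho$ is concave. Hence $\Delta\rho\le0$ and $\Delta e^{-\kappa\rho}\ge\kappa^2e^{-\kappa\rho}$, so $e^{-\kappa\rho}$ is a subsolution-type barrier. A maximum-principle or Kato-inequality argument applied to $|U|$ should then give the sharp constant. In the general $C^2$ case, the negative part of the mean curvature spoils this by a term bounded by $\mathcal K_\Omega\int_\Omega U^2$. I would estimate that term using $|\mathcal E_\Lambda u|\le\mathcal E_0|u|$, which follows from the maximum principle since $\Lambda\le0$. This gives $\|U\|^2\le\mathcal A_\Omega\|u\|^2$, and integrating $\mathcal K_\Omega\mathcal A_\Omega\|u\|^2$ against the $\frac{1}{2\kappa}$ weight produces the extra $\mathcal K_\Omega\mathcal A_\Omega$ in \eqref{eq:COmega}.

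The main obstacle is this key estimate with the \emph{sharp} factor $\frac1{2\kappa}$ rather than some multiple of it. The Rellich-type identity above, combined naively with Cauchy--Schwarz $2\kappa|U||\nabla U|\le|\nabla U|^2+\kappa^2U^2$, tends to lose a factor or to produce the wrong sign. My first attempt would therefore be the weighted test function $e^{2\kappa\rho}U$, or equivalently a ground-state substitution $U=e^{-\kappa\rho}V$, which turns the energy into a non-negative quantity plus $\kappa\int_\Omega(-\Delta\rho)U^2$ and boundary terms. If that fails, I would bypass the derivative formula and bound $\mathfrak d_{\Lambda_1}[u]$ from above directly, using the trial function $W=e^{-(\kappa_1-\kappa_2)\rho}\,\mathcal E_{\Lambda_2}u$ in \eqref{eq:DirPr}.
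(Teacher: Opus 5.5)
You have a genuine gap: your main route reduces the theorem to an estimate that is false, and your fallback is the right idea but uses the wrong exponent. The lower bound is correct and is exactly the paper's argument.

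\textbf{The pointwise estimate fails.} The bound $\|\mathcal E_\Lambda u\|^2_{L^2(\Omega)}\le\frac{\mathcal C_\Omega}{2\sqrt{-\Lambda}}\|u\|^2_{L^2(\partial\Omega)}$ is false already for the unit disk. There $\mathcal H\equiv1$, so $\mathcal K_{\mathbb D}=0$ and $\mathcal C_{\mathbb D}=1$. Take $u\equiv1$ and $\Lambda=-q^2$. Then $\mathcal E_{-q^2}1=I_0(qr)/I_0(q)$ and $\|\mathcal E_{-q^2}1\|^2_{L^2(\mathbb D)}=\pi\left(1-I_1(q)^2/I_0(q)^2\right)$. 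At $q=2$ this is about $1.612$, which exceeds $\frac1{2q}\|1\|^2_{L^2(\mathbb S)}=\frac\pi2\approx1.571$. Equivalently, $\sigma_1^{(-q^2)}(\mathbb D)=qI_1(q)/I_0(q)$ satisfies
\[
\frac{\dr}{\dr q}\,\sigma_1^{(-q^2)}(\mathbb D)=q\left(1-\frac{I_1(q)^2}{I_0(q)^2}\right)=1+\frac1{8q^2}+O(q^{-3}),
\]
which is about $1.026$ at $q=2$. So for the disk even the integrated consequence you are aiming for, $\mathfrak d_{\Lambda_1}[u]-\mathfrak d_{\Lambda_2}[u]\le(\sqrt{-\Lambda_1}-\sqrt{-\Lambda_2})\|u\|^2_{L^2(\partial\Omega)}$, is false. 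The theorem is a genuine finite-difference statement. It needs the slack between $\sqrt{-\Lambda_1}-\sqrt{-\Lambda_2}$ and $\sqrt{\Lambda_2-\Lambda_1}$, so no bound on $\frac{\dr}{\dr\Lambda}\mathfrak d_\Lambda$ of the type you propose can give it. Your barrier points the wrong way for the same reason. When $\Delta\rho\le0$, $\er^{-\kappa\rho}$ is a \emph{sub}solution of $-\Delta\phi+\kappa^2\phi=0$, so the maximum principle yields $\mathcal E_{-\kappa^2}1\ge\er^{-\kappa\rho}$, a lower bound rather than an upper one.

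\textbf{The exponent in the fallback.} Using a trial function in the Dirichlet principle is exactly what the paper does, but the exponent is essential. Put $V=\er^{-\alpha\rho}U$ with $U=\mathcal E_{\Lambda_2}u$ and $\kappa_j=\sqrt{-\Lambda_j}$. The Green formula for $\rho$, with $\partial_n\rho=-1$ and $\mu_\rho=-\Delta\rho$, gives exactly
\[
\begin{split}
\int_\Omega\left(|\nabla V|^2+\kappa_1^2|V|^2\right)\dr x&=\int_\Omega\er^{-2\alpha\rho}|\nabla U|^2\,\dr x+(\kappa_1^2-\alpha^2)\int_\Omega\er^{-2\alpha\rho}|U|^2\,\dr x\\
&\quad+\alpha\|u\|^2_{L^2(\partial\Omega)}-\alpha\int_\Omega\er^{-2\alpha\rho}|U|^2\,\dr\mu_\rho .
\end{split}
\]
With your choice $\alpha=\kappa_1-\kappa_2$ one has $\kappa_1^2-\alpha^2=\kappa_2^2+2\kappa_2\alpha$. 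This leaves an excess $2\kappa_2\alpha\int_\Omega\er^{-2\alpha\rho}|U|^2\,\dr x$, which could only be absorbed by an estimate of the same false type. In any case, the disk computation shows that the target bound $(\kappa_1-\kappa_2)\|u\|^2$ is false.

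The missing idea is to take $\alpha=\sqrt{\Lambda_2-\Lambda_1}$, so that $\kappa_1^2-\alpha^2=\kappa_2^2$. The first two terms are then at most $\mathfrak d_{\Lambda_2}[u]$, simply because $\er^{-2\alpha\rho}\le1$. Since $\mu_\rho\ge-\mathcal K_\Omega\,\dr x$, the last term is at most $\alpha\mathcal K_\Omega\|U\|^2_{L^2(\Omega)}\le\alpha\mathcal K_\Omega\mathcal A_\Omega\|u\|^2_{L^2(\partial\Omega)}$. For that last step, your bound $|\mathcal E_{\Lambda_2}u|\le\mathcal E_0|u|$ is a valid substitute for the paper's resolvent argument. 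No information about the $\Lambda$-derivative is needed.
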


\begin{corollary}\label{cor:C2-eigenvalues}
Under the assumptions of Theorem~\ref{thm:C2}, for every $k\in\mathbb N$,
\[
0\leq \sigma_k^{(\Lambda_1)}-\sigma_k^{(\Lambda_2)} \leq \mathcal{C}_\Omega\sqrt{\Lambda_2-\Lambda_1}.
\]
\end{corollary}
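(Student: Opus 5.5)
The corollary follows from Theorem~\ref{thm:C2} via the min--max principle, in the same way that Corollary~\ref{cor:convex-eigenvalues} follows from Theorem~\ref{thm:convex}. For $\Lambda\le 0$ the forms $\mathfrak d_{\Lambda}$ are closed, non-negative and share the common form domain $H^{1/2}(\partial\Omega)$. Hence the eigenvalues of $\DtN_\Lambda$ are given by the Courant--Fischer formula
\[
\sigma_k^{(\Lambda)}=\min_{\substack{\mathcal V\subset H^{1/2}(\partial\Omega)\\ \dim\mathcal V=k}}\ \max_{u\in\mathcal V\setminus\{0\}}\frac{\mathfrak d_\Lambda[u]}{\|u\|^2_{L^2(\partial\Omega)}},
\]
which is equivalent to \eqref{eq:var_pr} through \eqref{eq:DirPr}. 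In this formula the minimisation runs over the same family of subspaces for every $\Lambda\le0$, so forms can be compared directly subspace by subspace.

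By Theorem~\ref{thm:C2}, for every $u\in H^{1/2}(\partial\Omega)$ we have
\[
\mathfrak d_{\Lambda_2}[u]\le\mathfrak d_{\Lambda_1}[u]\le \mathfrak d_{\Lambda_2}[u]+\mathcal C_\Omega\sqrt{\Lambda_2-\Lambda_1}\,\|u\|^2_{L^2(\partial\Omega)}.
\]
We divide by $\|u\|^2_{L^2(\partial\Omega)}$ and take the maximum over $u\in\mathcal V\setminus\{0\}$ for a fixed $k$-dimensional subspace $\mathcal V$. The constant shift $\mathcal C_\Omega\sqrt{\Lambda_2-\Lambda_1}$ passes through the maximum unchanged. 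This gives, for each such $\mathcal V$, that $\max_{\mathcal V}$ of the $\Lambda_2$ Rayleigh quotient is at most $\max_{\mathcal V}$ of the $\Lambda_1$ Rayleigh quotient, which in turn is at most $\max_{\mathcal V}$ of the $\Lambda_2$ Rayleigh quotient plus $\mathcal C_\Omega\sqrt{\Lambda_2-\Lambda_1}$.

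Taking the minimum over all $\mathcal V$ preserves both inequalities, because the minimum over a common index set is monotone and commutes with adding a constant. We conclude that $\sigma_k^{(\Lambda_2)}\le\sigma_k^{(\Lambda_1)}\le\sigma_k^{(\Lambda_2)}+\mathcal C_\Omega\sqrt{\Lambda_2-\Lambda_1}$, which is the claim.

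The only point that needs care is that all the forms are compared on one common domain, so that the min--max runs over identical families of subspaces. This is guaranteed for $\Lambda\le0<\lambda_1^\Dir(\Omega)$, as recalled before Theorem~\ref{thm:convex}. No further obstacle arises: all the analytic content sits in Theorem~\ref{thm:C2}.
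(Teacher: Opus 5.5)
Your proposal is correct and follows the paper's route: the paper likewise says the corollary is immediate from the quadratic-form inequality of Theorem~\ref{thm:C2} via the min--max principle on the common form domain $H^{1/2}(\partial\Omega)$. Your subspace-by-subspace comparison of Rayleigh quotients is the standard justification of that step.
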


The same computation, with a different choice of the parameter $\alpha$, yields a second, complementary estimate, in which the geometric defect enters \emph{additively} rather than multiplicatively, and in which no auxiliary constant $\mathcal{A}_\Omega$ is needed. For $t\ge0$, set
\begin{equation}\label{eq:Theta}
\Theta_\Omega(t):=\frac{\mathcal{K}_\Omega+\sqrt{\mathcal{K}_\Omega^2+4t}}{2},
\end{equation}
that is, let $\Theta_\Omega(t)$ be the unique non-negative root $\alpha$ of the quadratic equation 
\[
\alpha^2 - \mathcal{K}_\Omega\alpha - t=0.
\]

\begin{theorem}\label{thm:main}
Let $\Omega\subset\R^d$ be a bounded domain with $C^2$ boundary. For any $\Lambda_1\leq\Lambda_2\leq0$,
\begin{equation}\label{eq:formHolderTheta}
0\leq \DtN_{\Lambda_1}-\DtN_{\Lambda_2}\leq \Theta_\Omega\left(\Lambda_2-\Lambda_1\right) I
\end{equation}
in the sense of quadratic forms. In particular, since $\Theta_\Omega(t)\le\sqrt t+\mathcal{K}_\Omega$,
\begin{equation}\label{eq:formHolderThetaweak}
0\leq \DtN_{\Lambda_1}-\DtN_{\Lambda_2}\leq \left(\sqrt{\Lambda_2-\Lambda_1}+\mathcal{K}_\Omega\right) I.
\end{equation}
\end{theorem}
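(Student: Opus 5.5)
The lower bound $\DtN_{\Lambda_1}-\DtN_{\Lambda_2}\ge0$ follows from the Dirichlet principle \eqref{eq:DirPr}. For every admissible $W$ with trace $u$, the functional $\|\nabla W\|^2-\Lambda\|W\|^2$ is non-increasing in $\Lambda$, so the minimum over $W$ is too. It therefore remains to show, for fixed $u\in H^{1/2}(\partial\Omega)$ and $t:=\Lambda_2-\Lambda_1>0$, that
\[
\mathfrak d_{\Lambda_1}[u]-\mathfrak d_{\Lambda_2}[u]\le\alpha\|u\|^2_{L^2(\partial\Omega)}\qquad\text{with }\alpha=\Theta_\Omega(t).
\]
The weakened form \eqref{eq:formHolderThetaweak} then follows from the elementary inequality $\sqrt{\mathcal K_\Omega^2+4t}\le\mathcal K_\Omega+2\sqrt t$.

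The plan is to run the same computation as in the proofs of Theorems \ref{thm:convex} and \ref{thm:C2}, but with a different choice of the free parameter $\alpha$. First I will build a test function for $\mathfrak d_{\Lambda_1}$ out of $U_2=\mathcal E_{\Lambda_2}u$. Then I will estimate the resulting error term by a Rellich-type integration by parts against a vector field $F$ which equals the outward unit normal on $\partial\Omega$ and satisfies $|F|\le1$. Near $\partial\Omega$ I will take $F=-\nabla\delta$, with $\delta$ the distance to the boundary, and then cut it off. On $\partial\Omega$ its divergence equals $\mathcal H$, so $\operatorname{div}F\ge-\mathcal K_\Omega$ there. I expect the proof to need only this boundary information and the sign control, which is where the $C^2$ assumption enters.

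The key steps, in order, are as follows.

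First, using the Euler--Lagrange equation $-\Delta U_2=\Lambda_2U_2$, rewrite $\mathfrak d_{\Lambda_1}$ evaluated on the test function as $\mathfrak d_{\Lambda_2}[u]$ plus a remainder. The remainder consists of $t\|\cdot\|^2$ bulk terms and cross terms $\int U\nabla U\cdot F$.

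Second, apply the divergence identity
\[
\int_{\partial\Omega}u^2=\int_\Omega\left(2U\,\nabla U\cdot F+U^2\operatorname{div}F\right),
\]
estimate the cross term by $2ab\le\alpha^{-1}a^2+\alpha b^2$, and use $\operatorname{div}F\ge-\mathcal K_\Omega$.

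Third, collect the bulk coefficient in front of $\|U\|^2_{L^2(\Omega)}$. It will be a multiple of $t+\mathcal K_\Omega\alpha-\alpha^2$. Choosing $\alpha$ as the non-negative root of $\alpha^2-\mathcal K_\Omega\alpha-t=0$, that is, $\alpha=\Theta_\Omega(t)$ by \eqref{eq:Theta}, kills this coefficient entirely. This is precisely why no constant $\mathcal A_\Omega$ appears, in contrast with Theorem \ref{thm:C2}, where a fixed $\alpha=\sqrt t$ leaves a residual $\mathcal K_\Omega\sqrt t\,\|U\|^2$ that is absorbed using \eqref{eq:AOmega}.

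The main obstacle is choosing the test function and the sign conventions so that the divergence identity yields an inequality in the needed direction. A naive choice $W=U_2$ only gives the bound $t\|U_2\|^2$, which is useless as $t\to\infty$. As an alternative, I will exploit the concavity of $\Lambda\mapsto\mathfrak d_\Lambda[u]$, which holds because it is an infimum of affine functions. Concavity gives $\mathfrak d_{\Lambda_1}[u]-\mathfrak d_{\Lambda_2}[u]\le t\,\|\mathcal E_{\Lambda_1}u\|^2$. It would then suffice to prove, for the $\Lambda_1$-harmonic extension $U_1=\mathcal E_{\Lambda_1}u$, a Rellich-type bound combining $\|U_1\|^2$, $\|\nabla U_1\|^2$ and $\mathfrak d_{\Lambda_1}[u]$ via the same field $F$. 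In this version the gradient term is absorbed using $\|\nabla U_1\|^2-\Lambda_1\|U_1\|^2=\mathfrak d_{\Lambda_1}[u]$, and the threshold $\alpha=\Theta_\Omega(t)$ again appears as the root making the bulk coefficient vanish. Whichever route closes, I will finally check the convex case $\mathcal K_\Omega=0$, where $\Theta_\Omega(t)=\sqrt t$, to confirm consistency with Theorem \ref{thm:convex}.
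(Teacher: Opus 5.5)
Several of your pieces match the paper: the lower bound from the Dirichlet principle, $\alpha=\Theta_\Omega(t)$ as the root of $\alpha^2-\mathcal K_\Omega\alpha-t=0$, and the passage to the weak form. But the central step is missing, and the mechanism you sketch cannot produce the coefficient $t+\mathcal K_\Omega\alpha-\alpha^2$ you announce.

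\textbf{The missing test function.} In the Dirichlet principle for $\mathfrak d_{\Lambda_1}[u]$, take $V=\er^{-\alpha\rho}U_2$ with $\rho=\dist(\cdot,\partial\Omega)$; it still has trace $u$. Then
\[
|\nabla V|^2=\er^{-2\alpha\rho}\bigl(|\nabla U_2|^2-\alpha\nabla|U_2|^2\cdot\nabla\rho+\alpha^2|U_2|^2\bigr),
\]
and the cross term is integrated by parts \emph{exactly} against the weighted field $\er^{-2\alpha\rho}\nabla\rho$. Three contributions arise:
\begin{itemize}
\item differentiating the weight gives $-2\alpha^2\|V\|^2$, so $-\alpha^2\|V\|^2$ net;
\item the boundary gives $+\alpha\|u\|^2_{L^2(\partial\Omega)}$, since $\partial_n\rho=-1$;
\item the bound $-\Delta\rho\ge-\mathcal K_\Omega$ gives at most $+\alpha\mathcal K_\Omega\|V\|^2$.
\end{itemize}
Adding $q_1^2\|V\|^2$ and using $\er^{-2\alpha\rho}\le1$ yields
\[
\mathfrak d_{\Lambda_1}[u]\le\mathfrak d_{\Lambda_2}[u]+\alpha\|u\|^2_{L^2(\partial\Omega)}+(t+\alpha\mathcal K_\Omega-\alpha^2)\|V\|^2_{L^2(\Omega)}.
\]

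\textbf{Why your scheme fails.} With an unweighted cut-off field $F$, the identity $\|u\|^2_{L^2(\partial\Omega)}=\int_\Omega(2U\nabla U\cdot F+U^2\operatorname{div}F)$ contains no term of size $\alpha^2\|U\|^2$ with a favourable sign. In the paper that term comes only from differentiating the exponential weight. Bounding the cross term by $2ab\le\alpha^{-1}a^2+\alpha b^2$ also creates an extra positive gradient term. Then the estimate cannot close with coefficient exactly one in front of $\mathfrak d_{\Lambda_2}[u]$. The boundary term $\alpha\|u\|^2_{L^2(\partial\Omega)}$ is itself produced by the integration by parts, so the cross term must be converted by an identity, not estimated.

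\textbf{The geometric input is not boundary information.} Since $\er^{-2\alpha\rho}$ never vanishes, you need $-\Delta\rho\ge-\mathcal K_\Omega$ in $\mathscr D'(\Omega)$ on the whole domain. This includes the ridge where $\rho$ is not smooth, and the paper uses the Lewis--Li--Li estimate for it. You also need a Green formula for $\rho$ to justify the boundary term.

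\textbf{Your fallback rests on a reversed inequality.} Since $\Lambda\mapsto\mathfrak d_\Lambda[u]$ is concave with derivative $-\|\mathcal E_\Lambda u\|^2_{L^2(\Omega)}$, one has
\[
t\,\|\mathcal E_{\Lambda_1}u\|^2_{L^2(\Omega)}\le\mathfrak d_{\Lambda_1}[u]-\mathfrak d_{\Lambda_2}[u]\le t\,\|\mathcal E_{\Lambda_2}u\|^2_{L^2(\Omega)}.
\]
So a bound on $U_1$ controls the difference only from below. The upper bound is exactly the naive $W=U_2$ estimate you already discarded as useless for large $t$. That route does not reduce the problem to anything provable.
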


\begin{corollary}\label{cor:main-eigenvalues}
Under the assumptions of Theorem~\ref{thm:main}, for every $k\in\mathbb N$,
\[
0\leq \sigma_k^{(\Lambda_1)}-\sigma_k^{(\Lambda_2)} \leq \Theta_\Omega\left(\Lambda_2-\Lambda_1\right) \leq \sqrt{\Lambda_2-\Lambda_1}+\mathcal{K}_\Omega.
\]
\end{corollary}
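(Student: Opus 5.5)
The statement to prove is Corollary~\ref{cor:main-eigenvalues}, and it follows from Theorem~\ref{thm:main} together with the min--max principle. The argument runs exactly as in Corollaries~\ref{cor:convex-eigenvalues} and~\ref{cor:C2-eigenvalues}.

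\emph{Step 1.} Since $\Lambda_1\le\Lambda_2\le 0<\lambda_1^\Dir(\Omega)$, the forms $\mathfrak d_{\Lambda_1}$ and $\mathfrak d_{\Lambda_2}$ are closed, semibounded and share the form domain $H^{1/2}(\partial\Omega)$ in $L^2(\partial\Omega)$. Their eigenvalues are therefore given by the Courant--Fischer formula
\[
\sigma_k^{(\Lambda)}=\min_{\substack{\mathcal L\subset H^{1/2}(\partial\Omega)\\ \dim\mathcal L=k}}\ \max_{u\in\mathcal L\setminus\{0\}}\frac{\mathfrak d_\Lambda[u]}{\|u\|^2_{L^2(\partial\Omega)}}.
\]
This is equivalent to \eqref{eq:var_pr} via \eqref{eq:DirPr}.

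\emph{Step 2.} Theorem~\ref{thm:main} gives, for every $u\in H^{1/2}(\partial\Omega)$,
\[
\mathfrak d_{\Lambda_2}[u]\le\mathfrak d_{\Lambda_1}[u]\le \mathfrak d_{\Lambda_2}[u]+\Theta_\Omega(\Lambda_2-\Lambda_1)\|u\|^2_{L^2(\partial\Omega)}.
\]
Divide by $\|u\|^2_{L^2(\partial\Omega)}$, then take the maximum over a fixed $k$-dimensional subspace $\mathcal L$ and the minimum over all such $\mathcal L$. Both operations are monotone, and adding the constant $\Theta_\Omega(\Lambda_2-\Lambda_1)$ commutes with them. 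This yields
\[
\sigma_k^{(\Lambda_2)}\le\sigma_k^{(\Lambda_1)}\le\sigma_k^{(\Lambda_2)}+\Theta_\Omega(\Lambda_2-\Lambda_1).
\]

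\emph{Step 3.} The last inequality is $\Theta_\Omega(t)\le\sqrt t+\mathcal K_\Omega$, which the paper states in Theorem~\ref{thm:main}. It follows from $\sqrt{\mathcal K_\Omega^2+4t}\le\mathcal K_\Omega+2\sqrt t$, obtained by squaring both non-negative sides.

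None of these steps is a real obstacle. The only point needing care is that both forms live on the same form domain, so that the min--max can be taken over the same families of subspaces; for $\Lambda\le 0$ this is exactly the observation made before Theorem~\ref{thm:convex}.
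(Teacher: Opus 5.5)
Your proposal is correct and follows the paper's argument: the paper also obtains the corollary immediately from the quadratic-form inequality of Theorem~\ref{thm:main} via the min--max principle on the common form domain $H^{1/2}(\partial\Omega)$. The final bound $\Theta_\Omega(t)\le\sqrt t+\mathcal{K}_\Omega$ is likewise justified there by $\sqrt{\mathcal{K}_\Omega^2+4t}\le\mathcal{K}_\Omega+2\sqrt t$.
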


\begin{remark}\label{rem:compare}
Neither Theorem~\ref{thm:C2} nor Theorem~\ref{thm:main} implies the other:  Theorem~\ref{thm:C2} is the stronger statement near the diagonal $\Lambda_1=\Lambda_2$, and Theorem~\ref{thm:main} is the stronger one away from it. The two statements convey genuinely different information. Only \eqref{eq:formHolderC2} tends to zero as $\Lambda_1\to\Lambda_2$, and it is therefore the only one of the two bounds giving the $\frac12$-H\"older continuity. On the other hand,
\begin{equation}\label{eq:Kinf}
\Theta_\Omega(t)=\sqrt t+\frac{\mathcal{K}_\Omega}{2}+O\left(t^{-1/2}\right)\qquad\text{as }t\to+\infty,
\end{equation}
so that \eqref{eq:formHolderTheta} retains the sharp leading constant one of Theorem~\ref{thm:convex}, at the cost of an additive defect. This is the regime relevant to \eqref{eq:conjB} for large $-\Lambda$, and there the additive constant $\frac{\mathcal{K}_\Omega}{2}$ cannot be improved, see Remark \ref{rem:infty}.
\end{remark}

\begin{remark}\label{rem:AOmega}
The constant $\mathcal{A}_\Omega$ of \eqref{eq:AOmega} admits an explicit bound in terms of the classical  \emph{torsion function} $\psi$ of $\Omega$, i.e.\ the solution of 
\[
-\Delta\psi=1\quad\text{in }\Omega, \qquad \psi|_{\partial\Omega}=0. 
\]
Indeed, for $U_0:=\mathcal E_0u$ one has $\Delta|U_0|^2=2|\nabla U_0|^2$, and evaluating $\int_\Omega\nabla\psi\cdot\nabla|U_0|^2\,\dr x$ by Green's formula in the two possible ways (the boundary term $\int_{\partial\Omega}\psi\,\partial_n|U_0|^2\,\dr S$ vanishes because $\psi|_{\partial\Omega}=0$) gives the identity
\begin{equation}\label{eq:torsionidentity}
\|U_0\|^2_{L^2(\Omega)}+2\int_\Omega\psi\,|\nabla U_0|^2\,\dr x =\int_{\partial\Omega}|u|^2\left(-\partial_n\psi\right)\dr S.
\end{equation}
Since $\psi\ge0$, and since taking $u\equiv1$ in \eqref{eq:torsionidentity} recovers $\int_{\partial\Omega}\left(-\partial_n\psi\right)\dr S=\int_\Omega(-\Delta\psi)\,\dr x=|\Omega|$, we obtain
\begin{equation}\label{eq:AOmegabounds}
\frac{|\Omega|}{|\partial\Omega|} \le  \mathcal{A}_\Omega \le  \max_{\partial\Omega}\left|\nabla\psi\right|.
\end{equation}
Both inequalities become equalities for a ball $B(R)\subset\R^d$, for which $\psi=\frac{R^2-|x|^2}{2d}$ and $\mathcal{A}_{B(R)}=\frac Rd=\frac{|B(R)|}{|\partial B(R)|}$, the supremum in \eqref{eq:AOmega} being attained at constants. We note that $\psi$ is also known as the \emph{landscape function} \cite{FilocheMayboroda}, and that $\mathcal{A}_\Omega^{-1/2}$ is the minimal \emph{tension} at Helmholtz parameter $0$ in the sense of the method of particular solutions, cf.\ \cite{Barnett, BarnettHassell}; the estimates available there are, however, high-frequency ones, and carry no explicit constants.
\end{remark}

We say that a $C^2$ domain is \emph{weakly mean-convex} if $\mathcal H\geq0$ on $\partial\Omega$.  In this case $\mathcal{K}_\Omega=0$, and Theorems~\ref{thm:C2} and \ref{thm:main} coincide and give back the sharp constant one.  In dimension two weak mean convexity is equivalent to convexity, while in dimensions $d\geq3$ it is a strictly weaker condition. The following result shows that the convexity condition of Theorem~\ref{thm:convex} can be relaxed for sufficiently smooth domains for $d\ge 3$.

\begin{corollary}\label{cor:meanconvex}
Let $\Omega\subset\R^d$ be a bounded weakly mean-convex domain with $C^2$ boundary. Then, for $\Lambda_1\leq\Lambda_2\leq0$,
\[
0\leq \DtN_{\Lambda_1}-\DtN_{\Lambda_2} \leq \sqrt{\Lambda_2-\Lambda_1}\,I
\]
in the sense of quadratic forms, and consequently \eqref{eq:Holder} holds for every $k\in\mathbb N$.
\end{corollary}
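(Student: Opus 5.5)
This is a direct consequence of Theorem~\ref{thm:C2} (or equally of Theorem~\ref{thm:main}), once we observe that weak mean convexity kills the geometric defect. The plan has three steps.

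First, since $\partial\Omega$ is $C^2$ and $\mathcal H\ge0$ on $\partial\Omega$, the negative part $\mathcal H_-=\max\{-\mathcal H,0\}$ vanishes identically. Hence $\mathcal{K}_\Omega=\|\mathcal H_-\|_{L^\infty(\partial\Omega)}=0$ in \eqref{eq:KOmega}.

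Second, we substitute into \eqref{eq:COmega}. Because $\mathcal{A}_\Omega<\infty$ (as noted after \eqref{eq:AOmega}, with the explicit bound \eqref{eq:AOmegabounds}), we get $\mathcal{C}_\Omega=1+0\cdot\mathcal{A}_\Omega=1$. Theorem~\ref{thm:C2} then gives $0\le\DtN_{\Lambda_1}-\DtN_{\Lambda_2}\le\sqrt{\Lambda_2-\Lambda_1}\,I$ in the sense of quadratic forms on $H^{1/2}(\partial\Omega)$. Alternatively, we can use \eqref{eq:Theta}: with $\mathcal{K}_\Omega=0$ we have $\Theta_\Omega(t)=\sqrt t$, so Theorem~\ref{thm:main} yields the same bound. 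This second route does not even need finiteness of $\mathcal{A}_\Omega$.

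Third, the eigenvalue inequality \eqref{eq:Holder} follows from the min--max principle, exactly as in Corollary~\ref{cor:convex-eigenvalues}. All forms $\mathfrak d_\Lambda$, $\Lambda\le0$, share the form domain $H^{1/2}(\partial\Omega)$, and we have $\mathfrak d_{\Lambda_2}\le\mathfrak d_{\Lambda_1}\le\mathfrak d_{\Lambda_2}+\sqrt{\Lambda_2-\Lambda_1}\,\|\cdot\|^2_{L^2(\partial\Omega)}$. Comparing Rayleigh quotients over $k$-dimensional subspaces then gives $\sigma_k^{(\Lambda_2)}\le\sigma_k^{(\Lambda_1)}\le\sigma_k^{(\Lambda_2)}+\sqrt{\Lambda_2-\Lambda_1}$.

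There is no real obstacle here, since the substance lies in Theorems~\ref{thm:C2} and \ref{thm:main}. The only point needing care is that the hypotheses match: $C^2$ regularity is assumed, so $\mathcal H$ is defined and continuous, and the sign convention is the same, with $\mathcal H$ the sum of the principal curvatures with respect to the outward normal.
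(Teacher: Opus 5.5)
Your proposal is correct and follows the paper's argument: weak mean convexity gives $\mathcal{K}_\Omega=0$, hence $\mathcal{C}_\Omega=1$ and $\Theta_\Omega(t)=\sqrt t$. Theorem~\ref{thm:C2} (or Theorem~\ref{thm:main}) then gives the form inequality, and the min--max principle gives the eigenvalue bounds. Your remark that the route through Theorem~\ref{thm:main} does not need finiteness of $\mathcal{A}_\Omega$ is also correct.
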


\begin{remark}\label{rem:constantsharp}
The constant one in Theorem~\ref{thm:convex} and Corollary~\ref{cor:meanconvex} is sharp.  Indeed, already for a ball, with $\Lambda=-q^2$, the principal Dirichlet-to-Neumann eigenvalue satisfies $\sigma_1^{(-q^2)}=q+O(1)$ as $q\to+\infty$, see, e.g., \cite[\S5.2]{GLP26}. Since $\sigma_1^{(0)}=0$, no constant smaller than one can hold uniformly in the comparison with $\Lambda_2=0$.
\end{remark}

The $C^2$ hypothesis in Theorems~\ref{thm:C2} and \ref{thm:main} is used only through the explicit constant \eqref{eq:KOmega}: both results extend to bounded $C^{1,1}$ domains, with $\mathcal K_\Omega$ replaced by a constant controlling the distributional Laplacian of the distance to the boundary, see Remark~\ref{rem:C11}.

\begin{remark}
There is also a  Riemannian analogue of Theorem~\ref{thm:C2}, which is stated in Theorem~\ref{thm:riemannian-general} below.  It shows that the domain-dependent $\frac12$-H\"older estimate is not specifically Euclidean; the sharp constant one is recovered under non-negative Ricci curvature and weak mean convexity of the boundary.
\end{remark}

We emphasise that the sharp constant-one comparison does not extend to arbitrary non-convex domains.  Moreover, the following examples show that both Conjectures~\ref{conj:A} and \ref{conj:B} may fail without an appropriate geometric assumption.

Let
\[
\begin{alignedat}{3}
B(x_0,\ell)&:=\{x\in\mathbb{R}^2: |x-x_0|<\ell\},&\quad B(\ell)&:=B(0, \ell),&\quad\mathbb{D}&:=B(1),\\
C(x_0, \ell)&:=\{x\in\mathbb{R}^2: |x-x_0|=\ell\},&\quad C(\ell)&:=C(0, \ell),&\quad\mathbb{S}&:=C(1),\\
A(x_0, \ell_1, \ell_2)&:=B(x_0,\ell_2)\setminus \overline{B(x_0,\ell_1)},&\quad A(\ell_1, \ell_2)&:=A(0, \ell_1, \ell_2),&\quad A(\ell)&:=A(1,\ell),
\end{alignedat}
\]
denote disks, circles, and annuli in the plane.

\begin{theorem}\label{thm:acounter}
Let $\ell_*\approx 9.1863$ be the unique root of the equation $\ell_*\log(\ell_*)=2(\ell_*+1)$. For any  annulus $A(\ell)$ with $\ell>\ell_*$,  there exists $\Lambda_\ell<0$ and  an eigenvalue branch which contradicts Conjecture \ref{conj:A} for all negative $\Lambda<\Lambda_\ell$. 
\end{theorem}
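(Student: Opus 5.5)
The plan is to exhibit the failing branch explicitly by separation of variables on $A(\ell)=A(1,\ell)$, restricting to radially symmetric boundary data. For each $\Lambda\le 0$ the space of radial functions on $\partial A(\ell)=\mathbb S\cup C(\ell)$ is two-dimensional, spanned by the indicator functions of the two circles. It is invariant under $\DtN_\Lambda$, so $\DtN_\Lambda$ restricted to it is a $2\times 2$ matrix. That matrix is self-adjoint with respect to the weighted inner product: weight $2\pi$ on the inner circle and $2\pi\ell$ on the outer one. Its two eigenvalues give two real-analytic branches $\sigma_-^{(\Lambda)}\le\sigma_+^{(\Lambda)}$. I would then compare $\sigma_+$ at $\Lambda=0$ with its behaviour as $\Lambda=-q^2\to-\infty$.

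\emph{Step 1: the value at $\Lambda=0$.} Radial harmonic functions are $a+b\log r$. Take $U=u_1+(u_2-u_1)\log r/\log\ell$ with boundary values $u_1$ on $\mathbb S$ and $u_2$ on $C(\ell)$. The outward normal derivatives are $-(u_2-u_1)/\log\ell$ on $\mathbb S$ and $(u_2-u_1)/(\ell\log\ell)$ on $C(\ell)$. Solving the $2\times2$ eigenproblem gives $\sigma_-^{(0)}=0$ (constants) and $\sigma_+^{(0)}=(1+\ell^{-1})/\log\ell$, with eigenvector $u_2=-u_1/\ell$.

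\emph{Step 2: asymptotics as $q\to\infty$.} For $\Lambda=-q^2$ the radial solutions are combinations of $I_0(qr)$ and $K_0(qr)$. Assemble the $2\times2$ matrix explicitly from these. As $q\to\infty$ the coupling between the circles is exponentially small, of order $\er^{-q(\ell-1)}$. So the eigenvalues are, up to exponentially small errors, the decoupled values
\[
qK_1(q)/K_0(q)=q+\tfrac12+O(q^{-1})\quad\text{on }\mathbb S,\qquad qI_1(q\ell)/I_0(q\ell)=q-\tfrac1{2\ell}+O(q^{-1})\quad\text{on }C(\ell).
\]
These follow from the standard Bessel asymptotics, consistent with the mean-curvature correction of sign $-\mathcal H/2$. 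Hence $\sigma_+^{(-q^2)}=q+\tfrac12+O(q^{-1})$.

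\emph{Step 3: branch identification.} This is the main obstacle: I must check that the analytic branch starting at $\sigma_+^{(0)}$ is the one tending to $q+\tfrac12$, that is, that the two radial branches never cross. I would show that the off-diagonal entry of the $2\times2$ matrix, the flux through one circle produced by data on the other, is nonzero for every $\Lambda\le0$. This follows from the maximum principle: the solution with data $1$ on one circle and $0$ on the other is positive inside, so its normal derivative on the zero circle is strictly negative. A symmetric (in the weighted sense) $2\times2$ matrix with nonzero off-diagonal entry has simple eigenvalues, so the analytic branches stay strictly ordered, and the upper one is $\sigma_+$ throughout. Nonradial angular modes $m\ge1$ are decoupled and do not affect the radial branch.

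\emph{Conclusion.} Combining the steps,
\[
\sigma_+^{(-q^2)}-\sigma_+^{(0)}-q\;\to\;\tfrac12-\frac{1+\ell^{-1}}{\log\ell},
\]
which is positive exactly when $\ell\log\ell>2(\ell+1)$. The function $\ell\log\ell-2\ell-2$ has derivative $\log\ell-1$, so it is increasing for $\ell>\er$. It is negative on $(1,\er]$ and tends to $+\infty$, hence it has a unique root $\ell_*\approx9.1863$ and is positive for $\ell>\ell_*$. For such $\ell$ the limit is positive, so there is $\Lambda_\ell<0$ with $\sigma_+^{(\Lambda)}-\sigma_+^{(0)}>\sqrt{-\Lambda}$ for all $\Lambda<\Lambda_\ell$, contradicting Conjecture~\ref{conj:A}.
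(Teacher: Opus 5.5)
Your proposal is correct and follows essentially the same route as the paper. Both restrict to the two-dimensional radial sector, obtain $\sigma_+^{(0)}=\frac{\ell+1}{\ell\log\ell}$ and $\sigma_+^{(-q^2)}=q+\frac12+o(1)$ from the $I_0,K_0$ solutions, and compare the two; your weighted-symmetric $2\times2$ formulation, with the maximum-principle (equivalently Wronskian) argument that the off-diagonal entries never vanish, explicitly justifies the non-crossing of the two radial branches, which the paper only asserts as following from standard Bessel function properties.
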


A numerical example indicates that one can take in Theorem \ref{thm:acounter} $\ell=10$ and $\Lambda_{\ell}=-23$, see Figure~\ref{fig:annulus}.

We also have a counterexample to Conjecture \ref{conj:B}. 
In order to formulate it, we describe a family of what we call \emph{Swiss cheese domains}.
Take $0<\epsilon<\frac14$, and choose a maximal family of points
$x_1,\dots,x_{m_\epsilon}\in B\left(\frac34-\epsilon\right)$ such that the disks
$B(x_j,\epsilon)\subset B\left(\frac34\right)$ are pairwise disjoint. By maximality, the disks
$B(x_j,2\epsilon)$ cover $B\left(\frac34-\epsilon\right)$, and hence
\[
4m_\epsilon\epsilon^2 \ge \left(\frac34-\epsilon\right)^2 \ge \frac14,
\]
hence,
\begin{equation}\label{eq:bcounter-packing}
m_\epsilon\ge\frac{1}{16\epsilon^2}.
\end{equation}

Let now 
\begin{equation}\label{eq:delta}
\delta:=\er^{-4},\qquad r_\epsilon:=\epsilon\delta=\epsilon\er^{-4}, 
\end{equation}
and define
\[
\Omega_\epsilon := \mathbb{D}\setminus \bigcup_{j=1}^{m_\epsilon}\overline{B(x_j, r_\epsilon)}.
\]
The domain $\Omega_\epsilon$ is smooth, bounded, and connected, see Figure \ref{fig:cheese}.

\begin{figure}[htb]
\centering
\includegraphics[width=\linewidth]{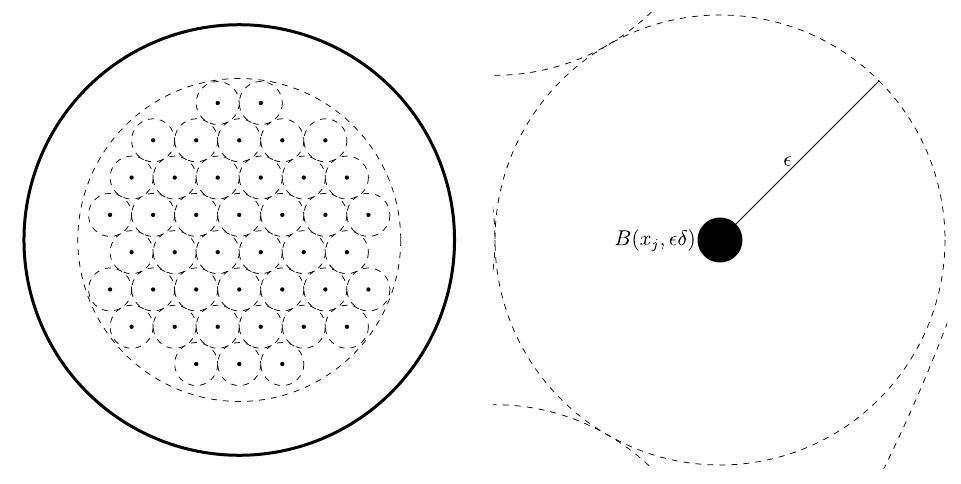}
\caption{The domain $\Omega_\epsilon$ (left), and its zoom near a single hole (right), not to scale. The dashed lines are auxiliary.}\label{fig:cheese}
\end{figure}

\begin{theorem}[Swiss cheese domains]\label{thm:bcounter}
For all sufficiently small $\epsilon>0$, there exists $q_\epsilon>0$ such that
\[
\sigma_{m_\epsilon}^{(-q_\epsilon^2)}(\Omega_\epsilon) - \sigma_{m_\epsilon}^{(0)}(\Omega_\epsilon) > q_\epsilon.
\]
\end{theorem}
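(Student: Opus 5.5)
Throughout, write $\Gamma_j:=C(x_j,r_\epsilon)$ for the hole boundaries and $m:=m_\epsilon$. We need two estimates: $\sigma_m^{(0)}(\Omega_\epsilon)$ is small, while $\sigma_m^{(-q^2)}(\Omega_\epsilon)$ exceeds $q$ by a definite amount for a suitable $q=q_\epsilon$. The heuristic is that each small hole carries an almost-constant low mode. At $\Lambda=0$ this mode costs only a logarithmic capacity. For $\Lambda=-q^2$ with $q\epsilon\approx 1$, the hole is effectively a disk-complement exterior problem, whose lowest DtN eigenvalue is $qK_1(qr)/K_0(qr)\approx 1/(r\log(1/(qr)))\gg q$, since $qr$ is small.

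\textbf{Step 1: upper bound at $\Lambda=0$.} For each $j$ I would take the test function $W_j$ equal to $1$ on $B(x_j,r_\epsilon)\cap\overline\Omega_\epsilon$ (i.e.\ near $\Gamma_j$), equal to $\log(|x-x_j|/\epsilon)/\log\delta$ on the annulus $r_\epsilon<|x-x_j|<\epsilon$, and $0$ elsewhere. The supports are disjoint and $W_j$ vanishes on $\mathbb S$ and on the other holes. Its Dirichlet energy is $2\pi/\log(1/\delta)=2\pi/4$, and its boundary mass is $2\pi r_\epsilon$. Inserting the $m$-dimensional span into \eqref{eq:var_pr} with $\Lambda=0$ gives
\[
\sigma_m^{(0)}\le \frac{1}{r_\epsilon\log(1/\delta)}=\frac{1}{4 r_\epsilon}.
\]

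\textbf{Step 2: lower bound at $\Lambda=-q^2$.} I would take $q=\kappa/\epsilon$ with $\kappa$ a fixed large constant, and show $\sigma_m^{(-q^2)}>\frac{1}{4r_\epsilon}+q$. The approach is a counting argument. The quadratic form $\|\nabla W\|^2+q^2\|W\|^2$ has at most $m-1$ eigenvalues below any threshold $\tau$ unless we can produce an $m$-dimensional trial space, so we bound the form from below on a subspace of codimension $\le m-1$.

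\begin{itemize}
\item \emph{Outer circle $\mathbb S$.} Here the weighted term is large. A one-dimensional trace estimate on the collar gives $\int_{\mathbb S}|W|^2\le \frac{C}{q}\left(\|\nabla W\|^2+q^2\|W\|^2\right)$, which contributes a Rayleigh quotient of at least about $q$. This is not quite enough on its own, so I would exploit the $\frac{d-1}{2}\mathcal H$ correction. Actually, $\mathbb S$ contributes exactly at level $q-\tfrac12+O(1/q)$, which is below the threshold $1/(4r_\epsilon)$, and this is a problem (see the obstacle below).

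\item \emph{Holes $\Gamma_j$.} Decompose $\Omega_\epsilon$ into disjoint annuli $A(x_j,r_\epsilon,\epsilon)$ plus the remainder, and impose Neumann decoupling, which only lowers the form. On each annulus, separate variables. The zero mode has DtN value at $\Gamma_j$ approximately $1/(r_\epsilon\log(1/(\kappa\delta)))$. With $\kappa$ large, $\log(1/(\kappa\delta))<4$, so this is $>\frac{1}{4r_\epsilon}+q$ once $\epsilon$ is small. The higher angular modes have DtN values $\ge 1/r_\epsilon$.
\end{itemize}

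\textbf{Expected obstacle.} The difficulty is that the outer-circle modes are numerous, roughly $q\sim\kappa/\epsilon$ of them below $1/(4r_\epsilon)$, and they spoil the counting. The remedy is to redo the count with the exact index: compare $\sigma_{m}$ at both parameters counting outer modes too. Let $N_{\mathrm{out}}$ be the number of modes of $\mathbb S$ at $\Lambda=0$ below $1/(4r_\epsilon)$; by the asymptotics $\sigma\approx|n|$ on the disk this is about $1/(2r_\epsilon)$. At $\Lambda=-q^2$ the corresponding count is $\#\{n:\sqrt{n^2+q^2}<\tau\}$.

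So rather than using index $m$ directly, I would prove the inequality for index $k=m+N$ with the full count. Since the theorem demands index $m_\epsilon$ specifically, it is better to choose $r_\epsilon$ tiny, which is the paper's choice $r_\epsilon=\epsilon\er^{-4}$. Then $m\sim\epsilon^{-2}\gg 1/r_\epsilon$ is false, since $1/r_\epsilon\sim \er^4/\epsilon\ll\epsilon^{-2}$. Hence $m\gg N_{\mathrm{out}}$, and the outer modes only shift the count by $O(1/\epsilon)$.

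To handle this, I would decouple the outer collar $\{|x|>3/4\}$ by Dirichlet/Neumann bracketing. At $\Lambda=0$, Step 1 already gives $m$ functions below $1/(4r_\epsilon)$. At $\Lambda=-q^2$, the outer collar and the hole annuli together contribute at most $N_{\mathrm{out}}(q)+0$ eigenvalues below $\tau=\frac{1}{4r_\epsilon}+q$, where $N_{\mathrm{out}}(q)\le C/r_\epsilon$. The hole modes all lie above $\tau$. Comparing $N_{\mathrm{out}}\le C\er^4/\epsilon<m\ge 1/(16\epsilon^2)$ gives $\sigma_m^{(-q^2)}\ge\tau$, with strict inequality as needed.

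The delicate checks are the precise Bessel-ratio bound $qK_1(qr)/K_0(qr)$ versus $1/(4r)$, which fixes $\kappa$ and the margin, and the uniform interior coercivity $q^2$ on the remainder region, which holds because $q\epsilon=\kappa$ is large.
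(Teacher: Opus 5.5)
Your proposal is correct and is essentially the paper's argument. Capacity test functions on disjoint annuli around the holes give $\sigma_{m_\epsilon}^{(0)}(\Omega_\epsilon)\le\frac{1}{4r_\epsilon}$. Neumann decoupling into hole annuli (no modes below the threshold), the outer collar ($O(1/\epsilon)$ modes by Weyl's law) and a remainder containing no part of $\partial\Omega_\epsilon$ (which contributes nothing, so no coercivity argument is needed there), combined with $m_\epsilon\ge\frac{1}{16\epsilon^2}$, then yields $\sigma_{m_\epsilon}^{(-q_\epsilon^2)}(\Omega_\epsilon)>\frac1{4r_\epsilon}+q_\epsilon$. Working on the Steklov side rather than through Robin duality makes no difference.

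The one step to repair is the hole estimate. Since $s:=q_\epsilon r_\epsilon=\kappa\er^{-4}$ does not depend on $\epsilon$, the term $q_\epsilon$ is not negligible against $\frac1{4r_\epsilon}$. So ``$\log(1/(\kappa\delta))<4$, hence the bound holds once $\epsilon$ is small'' does not suffice; it fails for $\kappa$ close to $1$. What you actually need is the scale-free inequality $sK_1(s)/K_0(s)>\frac14+s$, up to an exponentially small correction from the Neumann condition at radius $\epsilon$. This holds for instance at $s=0.1$, and for all large $s$ because $sK_1(s)/K_0(s)=s+\frac12+O(1/s)$. The large-$s$ case is exactly the paper's regime $q_\epsilon r_\epsilon=q_0\to\infty$ in Lemma~\ref{lem:bcounter-inner}.
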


\begin{remark}
In three dimensions, similar counterexamples $\Omega_\epsilon \subset \R^3$ exist, obtained by deleting numerous small  solid tori (rather than small balls) from the unit ball. By additionally cutting out thin channels joining each toroidal boundary with the outer boundary component of $\partial \Omega$, we can even produce a counterexample $\widetilde{\Omega}_\epsilon$ to Conjecture~\ref{conj:B} with connected boundary $\partial \widetilde{\Omega}_\epsilon$. We omit the details. However, in dimension two it is still an open question whether Conjectures \ref{conj:A} and \ref{conj:B} are true for \emph{simply connected non-convex planar} domains.
\end{remark} 

\subsection{Organisation of the paper}
In \S\ref{sec:prel}, we recall some additional auxiliary results needed for subsequent proofs. The proofs of main bounds of Theorems \ref{thm:convex}, \ref{thm:C2}, and \ref{thm:main} are presented in \S\ref{sec:proofconvex}. 
The crucial point is the use of test functions expressed in terms of exponentials of the distance function to the boundary, and the specific properties of the distance function for convex domains.
The counterexample Theorems \ref{thm:acounter} and \ref{thm:bcounter} are proved in \S\ref{sec:counter}. Various extensions are discussed in \S\ref{sec:extensions}: we briefly mention the consequences for the bounds on Robin eigenvalues, discuss the cases of Dirichlet-to-Neumann maps on Riemannian manifolds and on metric graphs, and provide an example showing that Theorems \ref{thm:C2} and \ref{thm:main} cannot be extended beyond $C^{1,1}$ regularity of the boundary, cf.\ Remark \ref{rem:C11}.

\section{Preliminaries}\label{sec:prel}
\subsection{The Robin Laplacian and Robin--Dirichlet-to-Neumann duality}\label{sec:RobDtN}

Let $\gamma\in\mathbb{R}$. We denote by $-\Delta^{\Rob,\gamma}_\Omega$ the Robin Laplacian in a bounded open set $\Omega$ with Lipschitz boundary (subject to the boundary condition $\partial_n U + \gamma U=0$ on $\partial\Omega$) defined as the self-adjoint operator  associated with the closed semi-bounded below form
\begin{equation}\label{eq:robinform}
\mathfrak{r}_\gamma[U]:=\|\nabla U\|^2_{L^2(\Omega)} + \gamma  \left\|U|_{\partial\Omega}\right\|^2_{L^2(\partial \Omega)}.
\end{equation}
Its eigenvalues, listed with multiplicities, will be denoted by 
\[
\lambda^{\Rob,\gamma}_1(\Omega)\le \lambda^{\Rob,\gamma}_2(\Omega)\le \dots.
\]
The eigenvalues of the Robin Laplacian are strictly monotone increasing functions of $\gamma$. Moreover, the union of the spectra $\bigcup_{\gamma\in\mathbb{R}}\Spec\left(-\Delta^{\Rob,\gamma}\right)$ can be decomposed into a family of real-analytic curves $\lambda^{\Rob,\gamma}$, once again at the cost of ordering. For further details, see \cite{Bucur17}. 

The spectra of the Dirichlet-to-Neumann map $\DtN_\Lambda$ and the Robin Laplacian $-\Delta^{\Rob,\gamma}$ are closely related via the so-called \emph{Robin--Dirichlet-to-Neumann duality}, see further \cite{Arendt12}, \cite{Friedlander1991}, \cite{LMP}, \cite{Hassannezhad22}.

\begin{prop}\label{prop:DtNRduality}
Let $\Omega\subset\mathbb{R}^d$ be a bounded Lipschitz domain, and let $\Lambda, \sigma\in\mathbb{R}$. Then $\sigma\in\Spec\left(\DtN_\Lambda\right)$ if and only if $\Lambda\in\Spec\left(-\Delta^{\Rob,-\sigma}\right)$. Moreover, the multiplicities of $\sigma$ as an eigenvalue of $\DtN_\Lambda$ and $\Lambda$ as an eigenvalue of $-\Delta^{\Rob,-\sigma}$ coincide, and $\mathcal{E}_\Lambda u$ is an eigenfunction of the  Robin Laplacian $-\Delta^{\Rob,-\sigma}$ if and only if $u$ is an eigenfunction of $\DtN_\Lambda$.

Furthermore, if, additionally, $\Lambda<\lambda_1^\Dir(\Omega)$ and $\sigma = \sigma_k^{(\Lambda)}(\Omega)$, $k\in\mathbb{N}$, then 
\[
\Lambda=\lambda^{\Rob, -\sigma}_k(\Omega).
\]
\end{prop}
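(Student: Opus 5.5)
The plan is to prove the duality directly at the level of weak formulations and then pin down the indices in the last claim by counting eigenvalues.

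\emph{Eigenvalue correspondence.} Assume first $\Lambda\notin\Spec(-\Delta^\Dir_\Omega)$, and suppose $\DtN_\Lambda u=\sigma u$ with $u\neq0$. Put $U=\mathcal E_\Lambda u\in H^1(\Omega)$. By \eqref{eq:quadDtN}, for every $V\in H^1(\Omega)$,
\[
(\nabla U,\nabla V)_{L^2(\Omega)}-\Lambda(U,V)_{L^2(\Omega)}=(\DtN_\Lambda u,V|_{\partial\Omega})_{L^2(\partial\Omega)}=\sigma(u,V|_{\partial\Omega})_{L^2(\partial\Omega)}.
\]
To see this, split $V=\mathcal E_\Lambda(V|_{\partial\Omega})+V_0$ with $V_0\in H^1_0(\Omega)$; the $V_0$ part contributes zero because $U$ is a weak $\Lambda$-harmonic function. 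This identity says exactly that $U\neq0$ solves the weak eigenvalue problem of $\mathfrak r_{-\sigma}$ with eigenvalue $\Lambda$.

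Conversely, let $U$ be a Robin eigenfunction for $\gamma=-\sigma$ with eigenvalue $\Lambda$. Testing with $V\in H^1_0(\Omega)$ shows that $U$ is $\Lambda$-harmonic. Its trace $u$ is nonzero, since otherwise $U$ would be a Dirichlet eigenfunction. Hence $U=\mathcal E_\Lambda u$, and reading the identity backwards gives $\DtN_\Lambda u=\sigma u$.

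Both maps $u\mapsto\mathcal E_\Lambda u$ and $U\mapsto U|_{\partial\Omega}$ are linear and injective on the respective eigenspaces, and they are mutually inverse. So the multiplicities coincide.

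\emph{The case $\Lambda\in\Spec(-\Delta^\Dir_\Omega)$.} Here I would use the extended definition, with domain orthogonal to the normal derivatives $\partial_n\phi$ of the Dirichlet eigenfunctions $\phi$. The compatibility condition $(u,\partial_n\phi)_{L^2(\partial\Omega)}=0$ is exactly what makes $\mathcal E_\Lambda u$ exist, uniquely modulo the Dirichlet eigenspace. I would fix it by orthogonality, or equivalently use the same identity with $V$ ranging over the corresponding restricted space. This step is where I expect the bookkeeping to be delicate: Robin eigenfunctions whose trace vanishes are Dirichlet eigenfunctions with $\partial_n U=0$. By unique continuation these do not exist, so the argument still goes through.

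\emph{The index statement.} Let $\Lambda<\lambda_1^\Dir(\Omega)$ and $\sigma=\sigma_k^{(\Lambda)}$. I would compare the counting functions by a min--max argument. Define
\[
F=\|\nabla W\|^2_{L^2(\Omega)}-\Lambda\|W\|^2_{L^2(\Omega)}-\sigma\|W|_{\partial\Omega}\|^2_{L^2(\partial\Omega)}=\mathfrak r_{-\sigma}[W]-\Lambda\|W\|^2_{L^2(\Omega)}.
\]

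First, by \eqref{eq:var_pr}, the span of the $\Lambda$-harmonic extensions of the first $k$ eigenfunctions of $\DtN_\Lambda$ is a $k$-dimensional subspace of $H^1(\Omega)$ on which $F\le0$. Hence $\lambda_k^{\Rob,-\sigma}\le\Lambda$.

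Second, suppose $\lambda_k^{\Rob,-\sigma}<\Lambda$. Then there is a $k$-dimensional subspace on which $F<0$. Since $\Lambda<\lambda_1^\Dir$, every nonzero $W$ in this subspace has nonzero trace, so the quotient in \eqref{eq:var_pr} is strictly less than $\sigma$ on it. Because the quotient is $0$-homogeneous and the unit sphere is compact in finite dimensions, its maximum is strictly less than $\sigma$, which gives $\sigma_k^{(\Lambda)}<\sigma$, a contradiction. Therefore $\Lambda=\lambda_k^{\Rob,-\sigma}$.

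One point needs care: $\lambda_k^{\Rob,-\sigma}$ is the $k$-th eigenvalue counted with multiplicity. The two inequalities above already handle this, because they compare the min--max values directly rather than individual eigenvalues.
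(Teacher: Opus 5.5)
The paper does not prove this proposition; it quotes it from the literature cited just before the statement. Your argument is correct and is essentially the standard one.

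\emph{Correspondence and multiplicities.} Splitting $V=\mathcal E_\Lambda(V|_{\partial\Omega})+V_0$ shows that $\DtN_\Lambda$-eigenfunctions are exactly the traces of Robin eigenfunctions. The trace is injective on the Robin eigenspace, either because $\Lambda\notin\Spec(-\Delta^\Dir_\Omega)$, or in general by unique continuation after extension by zero. This gives the equality of multiplicities.

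\emph{Index statement.} Your two-sided min--max comparison of $F=\mathfrak r_{-\sigma}-\Lambda\|\cdot\|^2_{L^2(\Omega)}$ on $k$-dimensional subspaces is the $k$-th-eigenvalue form of the Glazman-type transfer of subspaces between $\mathfrak d_\Lambda$ and $\mathfrak r_{-\sigma}$ that underlies \eqref{eq:DtNRdualityN}. The hypothesis $\Lambda<\lambda_1^\Dir(\Omega)$ enters exactly where you use it: $F[W]<0$ forces $W|_{\partial\Omega}\neq0$. Strictly, your first inequality uses the orthonormal eigenbasis of $\DtN_\Lambda$ together with \eqref{eq:quadDtN}, rather than \eqref{eq:var_pr}.

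The one place to tighten is the case $\Lambda\in\Spec(-\Delta^\Dir_\Omega)$. The paper never uses it later, since all its applications have $\Lambda\le0$. Write $E_\Lambda$ for the Dirichlet eigenspace.

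\emph{The normalisation.} Suppose ``fix it by orthogonality'' means choosing the representative of $\mathcal E_\Lambda u$ modulo $E_\Lambda$ to be $L^2(\Omega)$-orthogonal to $E_\Lambda$. That is not equivalent to working with the restricted form, and it breaks the claim that $\mathcal E_\Lambda u$ itself is a Robin eigenfunction. For example, on $(0,\pi)^2$ with $\Lambda=5$, the Neumann eigenfunction $\cos x\cos 2y$ has $L^2$ inner product $-\frac89$ with the Dirichlet eigenfunction $\sin 2x\sin y$. The correct representative is the one whose weak normal derivative is exactly $\sigma u$.

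\emph{From the restricted identity to the Robin equation.} The restricted identity only gives you vanishing of the defect functional $V\mapsto(\nabla U,\nabla V)-\Lambda(U,V)-\sigma(u,V|_{\partial\Omega})$ for those $V$ whose traces satisfy $(V|_{\partial\Omega},\partial_n\phi)_{L^2(\partial\Omega)}=0$ for all $\phi\in E_\Lambda$. Here you split $V$ as before, using that such traces admit $\Lambda$-harmonic extensions. Hence the defect is a linear combination of the finitely many functionals $V\mapsto(\partial_n\phi_i,V|_{\partial\Omega})$. Green's formula $(\nabla\phi,\nabla V)-\Lambda(\phi,V)=(\partial_n\phi,V|_{\partial\Omega})$ for $\phi\in E_\Lambda$ shows that subtracting the corresponding element of $E_\Lambda$ from $U$ removes the defect. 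The resulting Robin eigenfunction with trace $u$ is unique by your unique continuation argument.

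\emph{The converse direction.} The compatibility condition $u\perp\partial_n E_\Lambda$ for the trace $u$ of a Robin eigenfunction needs no extra work. Test the Robin equation with $V=\phi\in E_\Lambda$ and apply the same Green formula.
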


\begin{definition}\label{defn:counting}
For a self-adjoint semibounded below operator $\mathfrak A$ with a discrete spectrum of eigenvalues $\lambda_1\le \lambda_2\le \dots$, we define its \emph{eigenvalue counting function} as 
$\mathcal{N}^{\mathfrak A}: \mathbb{R}\to \{0\}\cup\mathbb{N}$, 
\[
\mathcal{N}^{\mathfrak A}(t):=\#\left\{k: \lambda_k\le t\right\}.
\]
\end{definition}

As an immediate corollary of Proposition \ref{prop:DtNRduality}, we deduce that  for any $\Lambda\le 0$ and any $\sigma\in\mathbb{R}$, 
\begin{equation}\label{eq:DtNRdualityN}
\mathcal{N}^{\DtN_\Lambda}(\sigma) = \mathcal{N}^{-\Delta^{\Rob,-\sigma}}(\Lambda).
\end{equation}
Note that for $\sigma<0$, \eqref{eq:DtNRdualityN} becomes a trivial identity $0=0$. 

\subsection{Glazman's Lemma}

We will make a frequent use of the following classical result. 

\begin{lemma}[\cite[\S~III]{ReedSimonIV}, \cite[Proposition 9.5]{Shubin20}, \cite[Lemma 3.2.31]{LMP}]\label{lem:glazman}
Let $\mathfrak a$ be a densely defined, closed, lower-semibounded symmetric form with
form domain $\Dom(\mathfrak{a})$ in a Hilbert space $\mathfrak H$, whose associated
self-adjoint operator $\mathfrak A$ has purely discrete spectrum
$\lambda_1\le\lambda_2\le\cdots$. For $\lambda\in\R$,
\[
\mathcal{N}^{\mathfrak A}(\lambda)=\#\{k\in\mathbb{N}:\lambda_k\le \lambda\}
=\max_{\substack{\mathcal{L}\subset\Dom(\mathfrak{a}):\\\mathfrak{a}[u]\le \lambda\|u\|_{\mathfrak H}^2\text{ for all }u\in \mathcal{L}}} \dim\mathcal{L},
\]
where $\mathcal{L}$ is a finite-dimensional linear subspace of $\Dom(\mathfrak{a})$. The same result holds if both  non-strict inequalities are replaced by strict ones.
\end{lemma}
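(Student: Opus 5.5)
The statement is the classical variational (Glazman) characterisation of the counting function, and I would prove it by the usual two-sided dimension argument, based on the spectral theorem for $\mathfrak A$. Since the spectrum is purely discrete, there is an orthonormal basis $\{\phi_k\}$ of $\mathfrak H$ with $\mathfrak A\phi_k=\lambda_k\phi_k$ and $\lambda_k\to+\infty$. In particular $N:=\mathcal N^{\mathfrak A}(\lambda)$ is finite. Fix $c$ with $\mathfrak A+c\ge 1$. The form domain is $\Dom(\mathfrak a)=\Dom\bigl((\mathfrak A+c)^{1/2}\bigr)$, and for $u=\sum_k c_k\phi_k\in\Dom(\mathfrak a)$ we have
\[
\mathfrak a[u]=\bigl\|(\mathfrak A+c)^{1/2}u\bigr\|^2_{\mathfrak H}-c\|u\|_{\mathfrak H}^2=\sum_k\lambda_k|c_k|^2 ,
\]
with the series converging absolutely. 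The first task is to justify this identity on the whole form domain, not just on $\Dom(\mathfrak A)$. I would do it via the functional calculus: $(\mathfrak A+c)^{1/2}$ acts diagonally in the basis $\{\phi_k\}$. I expect this step to be the only point needing care; everything else is linear algebra.

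\emph{Lower bound.} Take $\mathcal L:=\operatorname{span}\{\phi_1,\dots,\phi_N\}\subset\Dom(\mathfrak A)\subset\Dom(\mathfrak a)$. For $u=\sum_{k\le N}c_k\phi_k$ the identity gives $\mathfrak a[u]\le\lambda_N\|u\|^2\le\lambda\|u\|^2$. Hence the maximum is at least $N$.

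\emph{Upper bound.} Suppose $\mathcal L\subset\Dom(\mathfrak a)$ has $\dim\mathcal L\ge N+1$ and $\mathfrak a[u]\le\lambda\|u\|^2$ on $\mathcal L$. The map $u\mapsto\bigl((u,\phi_1),\dots,(u,\phi_N)\bigr)$ from $\mathcal L$ to $\mathbb C^N$ has a nontrivial kernel. So there is $0\ne u\in\mathcal L$ orthogonal to $\phi_1,\dots,\phi_N$. For this $u$ the identity yields $\mathfrak a[u]=\sum_{k>N}\lambda_k|c_k|^2\ge\lambda_{N+1}\|u\|^2>\lambda\|u\|^2$, since $\lambda_{N+1}>\lambda$ by the definition of $N$. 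This contradicts the assumption on $\mathcal L$, so the maximum equals $N$.

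\emph{Strict version.} Set $N_<:=\#\{k:\lambda_k<\lambda\}$ and require $\mathfrak a[u]<\lambda\|u\|^2$ for $u\ne0$. The span of $\phi_1,\dots,\phi_{N_<}$ satisfies this, because $\lambda_{N_<}<\lambda$. Conversely, a subspace of dimension $N_<+1$ contains a nonzero $u\perp\phi_1,\dots,\phi_{N_<}$. For it, $\mathfrak a[u]\ge\lambda_{N_<+1}\|u\|^2\ge\lambda\|u\|^2$, which contradicts the strict inequality. This gives the same conclusion with strict inequalities throughout.
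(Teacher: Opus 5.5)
Your proof is correct. It is the standard eigenbasis/dimension-counting argument found in the references the paper cites; the paper does not prove the lemma itself. You rightly identify the one point needing care, namely obtaining $\mathfrak a[u]=\sum_k\lambda_k|c_k|^2$ on all of $\Dom(\mathfrak a)=\Dom\bigl((\mathfrak A+c)^{1/2}\bigr)$ via Kato's second representation theorem. One minor remark: if $\mathfrak H$ is finite-dimensional (as for the metric-graph maps in the paper), the eigenvalue list is finite rather than tending to $+\infty$, and your upper-bound step is simply vacuous when $N=\dim\mathfrak H$.
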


\section{Proofs of main results}\label{sec:proofconvex}
\subsection{The main idea}\label{subsec:mainidea}

Let, for a bounded domain $\Omega\subset\mathbb{R}^d$,
\[
\rho(x):=\dist(x,\partial\Omega),
\]
be the distance function to the boundary. We recall some of its well-known properties;
see, e.g., \cite[Chapter~2]{BalinskyEvansLewis} for details.
The function $\rho$ is $1$-Lipschitz and satisfies
$|\nabla\rho|=1$ almost everywhere in $\Omega$. Throughout this section, we write
\[
\mu_\rho:=-\Delta\rho\in\mathscr D'(\Omega),
\]
where the Laplacian is understood in the sense of distributions.

The only analytic input in all that follows is a distributional lower bound on $\mu_\rho$. We therefore set
\begin{equation}\label{eq:MOmega}
\mathcal M_\Omega:=\inf\left\{m\ge 0:\ \mu_\rho\ge -m\,\dr x\ \text{ in }\mathscr D'(\Omega)\right\}\in[0,+\infty],
\end{equation}
with the convention $\inf\emptyset=+\infty$. Whenever $\mathcal M_\Omega<\infty$ the infimum in \eqref{eq:MOmega} is attained, so that
\begin{equation}\label{eq:MOmegabound}
\mu_\rho\ge -\mathcal M_\Omega\,\dr x\qquad\text{in }\mathscr D'(\Omega).
\end{equation}
In contrast with the curvature quantity $\mathcal K_\Omega$ of \eqref{eq:KOmega}, which requires $\partial\Omega\in C^2$, the constant $\mathcal M_\Omega$ is defined for an arbitrary bounded domain. The two are compared in \eqref{eq:distlowerC2} below.

If $\Omega$ is convex, then $\rho$ is concave in $\Omega$; hence
\begin{equation}\label{eq:convexdistmain}
\mu_\rho=-\Delta\rho\geq 0\qquad\text{in }\mathscr D'(\Omega).
\end{equation}
If instead $\partial\Omega$ is of class $C^2$, \cite[Theorem~1.6]{LewisLiLi} gives, in the notation of \eqref{eq:KOmega},
\[
-\Delta\rho\geq
\frac{(d-1)\mathcal H(\pi(x))}{(d-1)-\rho(x)\mathcal H(\pi(x))} \geq-\mathcal{K}_\Omega \qquad\text{in }\mathscr D'(\Omega),
\]
where $\pi(x)$ is a nearest boundary point on the regular set of the distance function.  In particular, comparing with \eqref{eq:MOmega},
\begin{equation}\label{eq:distlowerC2}
\mathcal M_\Omega\le \mathcal K_\Omega,\qquad\text{that is,}\qquad
\mu_\rho\geq-\mathcal{K}_\Omega\,\dr x \quad\text{in }\mathscr D'(\Omega).
\end{equation}
By \eqref{eq:convexdistmain}, $\mathcal M_\Omega=0$ for a convex domain; when $\Omega$ is weakly mean-convex, $\mathcal{K}_\Omega=0$ and \eqref{eq:distlowerC2} gives $\mathcal M_\Omega=0$ as well.

In the convex case, the interior trace of $\nabla\rho$ on $\partial\Omega$ is
$-n$ almost everywhere, where $n$ is the outward unit normal. In the $C^2$ case, $\rho$ is $C^2$ in a tubular neighbourhood of the boundary and $\nabla\rho=-n$ classically on $\partial\Omega$. These boundary properties are justified in \S\ref{sec:dist}.

The central estimate is the following slightly more general form of the substitution inequality.

\begin{prop}\label{prop:sub}
Let $\Omega\subset\R^d$ be a bounded domain with $\mathcal M_\Omega<\infty$ for which the Green formula \eqref{eq:Greenrho} holds; this is the case, in particular, if $\Omega$ is convex, or if $\partial\Omega$ is of class $C^{1,1}$.  For $\gamma\in\R$, $\alpha\geq0$, and $U\in H^1(\Omega)$, set
\begin{equation}\label{eq:VU}
V(x):=\er^{-\alpha\rho(x)}U(x),\qquad x\in\Omega.
\end{equation}
Then $V\in H^1(\Omega)$, $V|_{\partial\Omega}=U|_{\partial\Omega}$, and
\begin{equation}\label{eq:sub}
\mathfrak r_{\gamma-\alpha}[V]+\alpha^2\|V\|^2_{L^2(\Omega)} \leq \mathfrak r_\gamma[U]+\alpha \mathcal M_\Omega\|V\|^2_{L^2(\Omega)}.
\end{equation}
For convex or $C^2$ weakly mean-convex domains one has $\mathcal M_\Omega=0$, recovering the sharp form of the estimate.
\end{prop}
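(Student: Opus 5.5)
We expand $|\nabla V|^2$ directly and then integrate one cross term by parts; the Green formula \eqref{eq:Greenrho} and the bound \eqref{eq:MOmegabound} control everything. First, since $\rho$ is $1$-Lipschitz and bounded on $\Omega$, the factor $\er^{-\alpha\rho}$ is bounded and Lipschitz. Hence $V\in H^1(\Omega)$, and the chain/product rule gives
\[
\nabla V=\er^{-\alpha\rho}\left(\nabla U-\alpha U\nabla\rho\right).
\]
Because $\rho$ vanishes on $\partial\Omega$ (it extends continuously by zero), we get $V|_{\partial\Omega}=U|_{\partial\Omega}$. To be safe one can first take $U$ smooth up to the boundary and conclude by density.

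Next, using $|\nabla\rho|=1$ a.e.,
\[
|\nabla V|^2=\er^{-2\alpha\rho}|\nabla U|^2-\alpha\,\er^{-2\alpha\rho}\nabla\rho\cdot\nabla|U|^2+\alpha^2|V|^2 .
\]
We bound the first term by $|\nabla U|^2$, which is legitimate since $\alpha\ge0$ and $\rho\ge0$. For the middle term we write
\[
\er^{-2\alpha\rho}\nabla|U|^2=\nabla|V|^2+2\alpha|V|^2\nabla\rho ,
\]
again using $|\nabla\rho|^2=1$. This gives
\[
-\alpha\int_\Omega \er^{-2\alpha\rho}\nabla\rho\cdot\nabla|U|^2
=-\alpha\int_\Omega\nabla\rho\cdot\nabla|V|^2-2\alpha^2\|V\|^2 .
\]
Adding up,
\[
\|\nabla V\|^2\le\|\nabla U\|^2-\alpha^2\|V\|^2-\alpha\int_\Omega\nabla\rho\cdot\nabla|V|^2 .
\]

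Now we apply the Green formula \eqref{eq:Greenrho}. In its natural form it reads, for nonnegative $W=|V|^2\in W^{1,1}$,
\[
\int_\Omega\nabla\rho\cdot\nabla W=\langle\mu_\rho,W\rangle+\int_{\partial\Omega}(\partial_n\rho)\, W\,\dr S=\langle\mu_\rho,W\rangle-\int_{\partial\Omega}W\,\dr S,
\]
using $\nabla\rho=-n$ on $\partial\Omega$. Here $\mu_\rho$ is the distributional $-\Delta\rho$, which is a measure bounded below by \eqref{eq:MOmegabound}. Since $W\ge0$, we have $\langle\mu_\rho,W\rangle\ge-\mathcal M_\Omega\|V\|^2$. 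Therefore
\[
-\alpha\int_\Omega\nabla\rho\cdot\nabla|V|^2\le\alpha\mathcal M_\Omega\|V\|^2+\alpha\|V\|^2_{L^2(\partial\Omega)} .
\]
Substituting this and adding $(\gamma-\alpha)\|V|_{\partial\Omega}\|^2=(\gamma-\alpha)\|U|_{\partial\Omega}\|^2$ to both sides yields exactly \eqref{eq:sub}. The last sentence of the proposition then follows from \eqref{eq:convexdistmain} and \eqref{eq:distlowerC2}.

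The main obstacle is justifying the Green formula step when $\mu_\rho$ is only a measure, which happens in the convex case where $\rho$ is not $C^2$. The difficulty has two parts. First, the test function $|V|^2$ is only $W^{1,1}$ and is not compactly supported. Second, the pairing with the singular part of $\mu_\rho$ needs a quasi-continuous representative of $|V|^2$.

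My plan is to sidestep both issues. I would take $U\in C^\infty(\overline\Omega)$, so that $|V|^2$ is Lipschitz up to the boundary. I would then apply \eqref{eq:Greenrho} on the interior sets $\{\rho>t\}$, where the lower bound on $\mu_\rho$ applies to a nonnegative continuous test function, and let $t\to0$ using the boundary trace $\nabla\rho\to-n$. Finally I would extend to all of $H^1(\Omega)$ by density. This is legitimate because both sides of \eqref{eq:sub} are continuous in $U$ in the $H^1$ norm: the map $U\mapsto V$ is bounded on $H^1$, and the trace is continuous.
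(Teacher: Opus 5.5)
Your proposal is correct and is essentially the paper's proof. The paper applies \eqref{eq:Greenrho} to $\varphi=wg=\er^{-2\alpha\rho}|U|^2$, which is exactly your $|V|^2$. It then uses $|\nabla\rho|=1$, $w\le 1$ and $\mu_\rho\ge-\mathcal M_\Omega\,\dr x$, and passes from smooth $U$ to $H^1(\Omega)$ by density.

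The detour through the sets $\{\rho>t\}$ is unnecessary: for $U\in C^\infty(\overline\Omega)$ the function $|V|^2$ already lies in $\operatorname{Lip}(\overline\Omega)$, where \eqref{eq:Greenrho} is assumed. The one thing the paper does beyond your sketch is to verify \eqref{eq:Greenrho} for convex domains, using the concave extension $b$ built from the support function and the $BV$ trace $\nabla b=-n$. Your plan only asserts this.
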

Proposition~\ref{prop:sub} is proved in \S\ref{sec:dist}.

\begin{remark}\label{rem:C11}
The $C^2$ boundary  assumption in Theorems~\ref{thm:C2} and \ref{thm:main} can be further relaxed to allow $C^{1,1}$ boundaries as long as $\mathcal K_\Omega$ is replaced by $\mathcal M_\Omega$ in \eqref{eq:COmega} and \eqref{eq:Theta}, we omit the details. However, lowering the regularity beyond $C^{1.1}$ may make the main statements useless, see \S\ref{sec:wiggly}.
\end{remark}

\subsection{Proofs of Theorems \ref{thm:convex}, \ref{thm:C2}, and \ref{thm:main}}

Assume $\Lambda_1\leq\Lambda_2\leq0$ and write
\[
q_j:=\sqrt{-\Lambda_j},\qquad
s:=\Lambda_2-\Lambda_1=q_1^2-q_2^2 \ge  0.
\]
We first note that the lower form inequality does not require any geometric assumption.  Indeed, by the Dirichlet principle \eqref{eq:DirPr}, for every $u\in H^{1/2}(\partial\Omega)$,
\begin{equation}\label{eq:formmonotonicity}
\mathfrak d_{\Lambda_2}[u]\leq \mathfrak d_{\Lambda_1}[u],
\end{equation}
since $q_2^2\leq q_1^2$.

All three upper bounds come from one and the same computation, in which the exponent $\alpha$ of the substitution \eqref{eq:VU} is left \emph{free} and is fixed only at the very last step; the three theorems correspond to three different choices of $\alpha$.

For $u\in H^{1/2}(\partial\Omega)$ and $q\ge 0$,  we will use the shorthand notation
\[
U_q = \mathcal E_{-q^2}u.
\]
Let $\alpha\ge0$ be arbitrary, and put
\[
V:=\er^{-\alpha\rho}U_{q_2}.
\]
Since $V|_{\partial\Omega}=u$, $W:=V$ is an admissible argument in the Dirichlet principle \eqref{eq:DirPr} for $\mathfrak d_{\Lambda_1}[u]$. Applying Proposition~\ref{prop:sub} with $\gamma=\alpha$ gives
\[
\int_\Omega |\nabla V|^2\,\dr x+\alpha^2\|V\|^2_{L^2(\Omega)}
\leq \int_\Omega |\nabla U_{q_2}|^2\,\dr x + \alpha\|u\|^2_{L^2(\partial\Omega)} + \alpha \mathcal{M}_\Omega\|V\|^2_{L^2(\Omega)}.
\]
Adding $q_1^2\|V\|^2_{L^2(\Omega)}$ to both sides, writing $q_1^2=q_2^2+s$, and using $|V|\leq|U_{q_2}|$ to estimate $q_2^2\|V\|^2_{L^2(\Omega)}\le q_2^2\|U_{q_2}\|^2_{L^2(\Omega)}$, we obtain, for every $\alpha\ge0$,
\begin{equation}\label{eq:formgeneral-intermediate}
\begin{split}
\mathfrak d_{\Lambda_1}[u] &\leq \int_\Omega\left(|\nabla V|^2+q_1^2|V|^2\right)\,\dr x\\
&\leq \mathfrak d_{\Lambda_2}[u] +\alpha\|u\|^2_{L^2(\partial\Omega)} +\left(s+\alpha \mathcal{M}_\Omega-\alpha^2\right)\|V\|^2_{L^2(\Omega)}.
\end{split}
\end{equation}

\begin{proof}[Proof of Theorem~\ref{thm:convex}] Here $\Omega$ is convex, hence $\mathcal{M}_\Omega=0$ by \eqref{eq:convexdistmain}, and the choice $\alpha=\sqrt s$ annihilates the last term in \eqref{eq:formgeneral-intermediate}. Together with \eqref{eq:formmonotonicity} this completes the proof.
\end{proof}

\begin{proof}[Proof of Theorem~\ref{thm:main}] 
Let now $\partial\Omega$ be of class $C^2$, write $\Theta_\Omega^{\mathcal M}$ for the function \eqref{eq:Theta} with $\mathcal K_\Omega$ replaced by $\mathcal M_\Omega$, and choose
\[
\alpha:=\Theta^{\mathcal M}_\Omega(s)\ \ge 0.
\]
By the definition of $\Theta^{\mathcal M}_\Omega(s)$ as the non-negative root of $\alpha^2-\mathcal{M}_\Omega\alpha - s=0$, the coefficient $s+\alpha \mathcal{M}_\Omega-\alpha^2$ in \eqref{eq:formgeneral-intermediate} vanishes identically, and \eqref{eq:formgeneral-intermediate} reduces to
\[
\mathfrak d_{\Lambda_1}[u]-\mathfrak d_{\Lambda_2}[u]
\le \Theta^{\mathcal M}_\Omega(s)\,\|u\|^2_{L^2(\partial\Omega)}.
\]
Since the right-hand side of \eqref{eq:Theta} is non-decreasing in the constant appearing in it, \eqref{eq:distlowerC2} gives $\Theta^{\mathcal M}_\Omega(s)\le\Theta_\Omega(s)$; this, together with \eqref{eq:formmonotonicity}, proves \eqref{eq:formHolderTheta}, the weaker form \eqref{eq:formHolderThetaweak} follows from $\sqrt{\mathcal{K}_\Omega^2+4s}\le \mathcal{K}_\Omega+2\sqrt s$. Note that here the error term is absorbed into the spectral parameter, at the cost of making $\alpha$ larger, rather than estimated separately; in particular no analogue of $\mathcal{A}_\Omega$ is required, and the argument uses no boundary regularity beyond that already needed for the finiteness of $\mathcal M_\Omega$, cf. Remark~\ref{rem:C11}.
\end{proof}

\begin{proof}[Proof of Theorem~\ref{thm:C2}] Choose instead $\alpha=\sqrt s$, so that the coefficient in \eqref{eq:formgeneral-intermediate} equals $\alpha \mathcal{M}_\Omega\ge0$ and, using once more $|V|\le|U_{q_2}|$ and then $\mathcal M_\Omega\le\mathcal K_\Omega$,
\begin{equation}\label{eq:formgeneral-intermediate-C2}
\mathfrak d_{\Lambda_1}[u] \leq \mathfrak d_{\Lambda_2}[u] +\alpha\|u\|^2_{L^2(\partial\Omega)} +\alpha \mathcal{K}_\Omega\|U_{q_2}\|^2_{L^2(\Omega)}.
\end{equation}
It remains to estimate $\|U_{q_2}\|_{L^2(\Omega)}$, and this is where the constant $\mathcal{A}_\Omega$ from \eqref{eq:AOmega} enters.  

We first prove that $\mathcal{A}_\Omega$ is finite, or, more precisely, that 
the upper bound in \eqref{eq:AOmegabounds} is finite under the regularity
assumptions used here. Indeed, since $\partial\Omega$ is of class $C^2$,
the global elliptic
regularity for the Dirichlet problem gives $\psi\in W^{2,p}(\Omega)$ or every $1<p<\infty$,
see, for instance, \cite[Theorem~9.15]{GilbargTrudinger}. Choosing
$p>d$ and using Sobolev embedding, we obtain
$\psi\in C^{1}(\overline\Omega)$. Hence
$\max_{\partial\Omega}|\nabla\psi|<\infty$, and
\eqref{eq:AOmegabounds} in particular shows that $\mathcal{A}_\Omega<\infty$.

The same constant controls the $L^2$ norm of every negative-parameter harmonic extension $U_q=\mathcal E_{-q^2}u$.  Indeed, 
\[
U_q - U_0=-q^2\left(-\Delta_\Omega^\Dir+q^2\right)^{-1} U_0,
\]
so that
\begin{equation}\label{eq:L2extension}
U_q=-\Delta_\Omega^\Dir\left(-\Delta_\Omega^\Dir+q^2\right)^{-1}U_0,
\qquad \|U_q\|_{L^2(\Omega)}\leq\|U_0\|_{L^2(\Omega)} \leq \sqrt{\mathcal{A}_\Omega}\,\|u\|_{L^2(\partial\Omega)}.
\end{equation}
Substituting \eqref{eq:L2extension} into \eqref{eq:formgeneral-intermediate-C2} gives
\[
\mathfrak d_{\Lambda_1}[u]-\mathfrak d_{\Lambda_2}[u]
\leq \alpha(1+\mathcal{K}_\Omega \mathcal{A}_\Omega)\|u\|^2_{L^2(\partial\Omega)}
= \mathcal{C}_\Omega\sqrt{\Lambda_2-\Lambda_1}\,\|u\|^2_{L^2(\partial\Omega)},
\]
which, together with \eqref{eq:formmonotonicity}, proves Theorem~\ref{thm:C2}.  
\end{proof}

Corollaries~\ref{cor:convex-eigenvalues}, \ref{cor:C2-eigenvalues}, \ref{cor:main-eigenvalues}, and \ref{cor:meanconvex} follow immediately from the min--max principle and from $\mathcal{K}_\Omega=0$ in the weakly mean-convex case.

\subsection{Properties of the $\dist$ function and the proof of Proposition \ref{prop:sub}}\label{sec:dist}

\begin{proof}[Proof of Proposition~\ref{prop:sub}]
The proof is essentially just integration by parts. To see this, let us first argue formally, assuming that $\rho$ and $U$ are regular enough. With $\mu_\rho=-\Delta\rho$ as introduced in \S\ref{subsec:mainidea}, set
\[
g=|U|^2,\qquad w=\er^{-2\alpha\rho}.
\]
By \eqref{eq:MOmegabound}, under the hypotheses of Proposition~\ref{prop:sub} we have
\[
\mu_\rho\geq- \mathcal{M}_\Omega\,\dr x.
\]
Moreover, $|\nabla\rho|=1$ almost everywhere in $\Omega$, $\partial_n\rho=-1$ on $\partial\Omega$ in the appropriate trace sense, and $w=1$ on $\partial\Omega$. Integration by parts therefore yields
\begin{equation}\label{eq:GGrho-cross}
\int_\Omega w\nabla g\cdot\nabla\rho\,\dr x = \int_\Omega wg\,\dr\mu_\rho
+2\alpha\int_\Omega wg\,\dr x - \int_{\partial\Omega}g\,\dr S.
\end{equation}
Since $\nabla g=2\operatorname{Re}(\overline U\nabla U)$, we have
\[
|\nabla V|^2
=w\left(|\nabla U|^2-\alpha\nabla g\cdot\nabla\rho+\alpha^2g\right),
\]
and hence
\[
\begin{split}
\mathfrak r_{\gamma-\alpha}[V]+\alpha^2\|V\|_{L^2(\Omega)}^2 &=\int_\Omega w|\nabla U|^2\,\dr x
+\gamma\int_{\partial\Omega}g\,\dr S - \alpha\int_\Omega wg\,\dr\mu_\rho\\
&\leq \mathfrak r_\gamma[U]+\alpha\mathcal{M}_\Omega\int_\Omega wg\,\dr x=\mathfrak r_\gamma[U]+\alpha \mathcal{M}_\Omega\|V\|^2_{L^2(\Omega)}.
\end{split}
\]
This proves the formal version of \eqref{eq:sub}.  The only geometric input in the inequality is the distributional lower bound on $\mu_\rho$.

It remains to justify the Green formula \eqref{eq:GGrho-cross}. Assume first that $\Omega$ is convex. Let $h_{\overline\Omega}$ be the support function of $\overline\Omega$, and set
\[
b(x):=\inf_{\theta\in\mathbb S^{d-1}} \left(h_{\overline\Omega}(\theta)-x\cdot\theta\right).
\]
Then $b$ is a concave and $1$-Lipschitz extension of $\rho$ to all of $\mathbb R^d$. Since $b=\rho$ in $\Omega$, the restriction $(-\Delta b)|_\Omega$ is precisely the distribution $\mu_\rho$. Applying \cite[Theorem~6.8]{EvansGariepy} to the convex function $-b$, we obtain $\nabla b\in BV(\Omega)$ and recover that $\mu_\rho$ is a finite nonnegative Radon measure. Fix a point $z\in\partial\Omega$ such that the unit normal $n(z)$ is unique, and consider a sequence $\Omega\ni x_j\to z$ of differentiability points of $b$. At each such point $\nabla b(x_j)=-\theta_j$, where $\theta_j$ is a minimiser in the definition of $b(x_j)$. Passing to a subsequence and using the uniqueness of the normal gives $\theta_j\to n(z)$, and therefore
\[
\lim_{x\to z}\nabla b(x)=-n(z)
\]
through differentiability points. Since the outward normal is unique almost everywhere on $\partial\Omega$, the interior trace of $\nabla b$ is $-n$ by the $BV$ trace characterisation \cite[Theorem~5.7]{EvansGariepy}. The integration by parts formula \cite[Theorem~5.6]{EvansGariepy}, extended from $C^1$ to Lipschitz test functions by approximation, now gives
\begin{equation}\label{eq:Greenrho}
\int_\Omega\varphi\,\dr\mu_\rho =\int_\Omega\nabla\varphi\cdot\nabla\rho\,\dr x + \int_{\partial\Omega}\varphi\,\dr S,
\qquad \varphi\in\operatorname{Lip}(\overline\Omega).
\end{equation}
For smooth $U$, taking $\varphi=wg$ gives \eqref{eq:GGrho-cross}; the extension to arbitrary $U\in H^1(\Omega)$ follows by approximation, since $\er^{-\alpha\rho}$ is an $H^1$ multiplier.

Assume next that $\partial\Omega$ is of class $C^2$.  By \eqref{eq:distlowerC2}, $\mu_\rho+\mathcal{K}_\Omega\,\dr x$ is a nonnegative Radon measure.  Moreover, $\rho$ is $C^2$ in a tubular neighbourhood of $\partial\Omega$, so $\mu_\rho$ has a bounded density there; on the compact complement of a smaller collar it has finite mass.  Thus $\mu_\rho$ is a finite signed Radon measure.  We also have $\nabla\rho=-n$ on the boundary.  Formula \eqref{eq:Greenrho} follows by applying the distributional identity for $\mu_\rho$ to cutoffs supported away from $\partial\Omega$ and then letting the cutoff approach the boundary (equivalently, by integrating over the inner parallel domains $\{\rho>t\}$ and letting $t\to0^+$).  The formal calculation above then proves \eqref{eq:sub}, and the extension from smooth $U$ to $H^1(\Omega)$ is again by approximation.
\end{proof}

\section{Counterexamples}\label{sec:counter}

The counterexamples of this section are motivated by the following calculation for a related, albeit somewhat different, problem. Namely, we consider the Dirichlet-to-Neumann map for the Helmholtz equation   
\begin{equation}\label{eq:Helmholtz}
-\Delta U = \Lambda U
\end{equation}
with $\Lambda<0$ in the \emph{exterior} $\Omega^\mathrm{ext} = \mathbb{R}^d\setminus\overline{\Omega}$ of a bounded Lipschitz domain $\Omega\subset\mathbb{R}^d$. For $\Lambda<0$, the operator can be defined in the same variational  manner as for bounded domains; in the case $\Lambda=0$, see \cite{external} for details.

If we take $\Omega=\mathbb{D}\in\mathbb{R}^2$, then the eigenvalues of $\DtN_\Lambda\left(\mathbb{D}^\mathrm{ext}\right)$ are easily found by separation of variables. In particular, the bottom eigenvalue always corresponds to a radial eigenfunction and is given, in terms of  modified Bessel functions $K_\nu$, by
\[
\sigma_1^{(\Lambda)}\left(\mathbb{D}^\mathrm{ext}\right)=
\begin{cases}
\frac{q K_1(q)}{K_0(q)}&\quad\text{if }\Lambda=-q^2, q>0,\\
0,&\quad\text{if }\Lambda=0.
\end{cases}
\]
It is easily seen \cite[eq. (17a)]{GrCh} that $\sigma_1^{(-q^2)}-\sigma_1^{(0)}=-\frac{1}{\log q}(1+o(1))>q$ as $q\to 0^+$, in contrast to validity of \eqref{eq:conjB} with $k=1$ for bounded domains.  

\begin{remark} 
We note that, according to \cite{GrCh}, both Conjectures \ref{conj:A} and \ref{conj:B} remain valid  for the exterior of the unit ball in dimensions three and higher.
\end{remark}

\subsection{Proof of  Theorem \ref{thm:acounter}}

A general radial solution of the Helmholtz equation \eqref{eq:Helmholtz} with $\Lambda<0$ in the annulus $A(\ell)$ is given, in terms of modified Bessel functions $I_0$ and $K_0$,  by 
\[
U(r) = c_1 I_0(q r)  + c_2 K_0(q r), \qquad q=\sqrt{-\Lambda},
\]
and a routine computation shows that there are two eigenvalues of $\DtN_\Lambda(A(\ell))$ (corresponding to the radial bulk eigenfunctions), which are the two solutions $\sigma^{(-q^2)}_+\ge \sigma^{(-q^2)}_-$ of the quadratic equation 
\[
\left(\sigma - q\frac{I_1(q\ell)}{I_0(q\ell)}\right)\left(\sigma - q\frac{K_1(q)}{K_0(q)}\right) - \frac{K_0(q\ell)I_0(q)}{I_0(q\ell)K_0(q)}\left(\sigma + q\frac{K_1(q\ell)}{K_0(q\ell)}\right)\left(\sigma +q\frac{I_1(q)}{I_0(q)}\right)=0.
\]

The following properties of $\sigma_\pm^{(-q^2)}$ are easily deduced from the standard properties of the modified Bessel functions, see e.g. \cite[\S10]{dlmf}.
\begin{itemize}
\item The two branches never intersect, $\sigma^{(-q^2)}_+ > \sigma^{(-q^2)}_-$, and therefore represent two distinct real-analytic branches of eigenvalues.
\item As $q\to+\infty$, we have
\begin{equation}\label{eq:largealphaannulus} 
\sigma^{(-q^2)}_- = q - \frac{1}{2\ell}+o(1), \qquad \sigma^{(-q^2)}_+ = q + \frac{1}{2}+o(1).
\end{equation}
\item As $q\to+0$, we have
\begin{equation}\label{eq:smallalphaannulus} 
\sigma^{(-q^2)}_- \to 0:=\sigma^{(0)}_-,\qquad \sigma^{(-q^2)}_+ \to \frac{\ell+1}{\ell\log\ell}=:\sigma^{(0)}_+.
\end{equation}
The two limits coincide with the eigenvalues of $\DtN_0$ corresponding to the radial bulk eigenfunctions $U(r)=1$ and $U(r) = 1- \frac{(\ell+1)\log r}{\ell\log\ell}$, respectively.
\end{itemize}

Comparing \eqref{eq:largealphaannulus} and \eqref{eq:smallalphaannulus}, we conclude that for sufficiently large $q$,
\[
\sigma^{(-q^2)}_+ -  \sigma^{(0)}_+ - q =\frac{1}{2} - \frac{\ell+1}{\ell\log\ell} + o(1),
\]
with the positive leading term if $\ell>\ell_*$,  which completes the proof.

\begin{figure}[htb]
\centering
\includegraphics{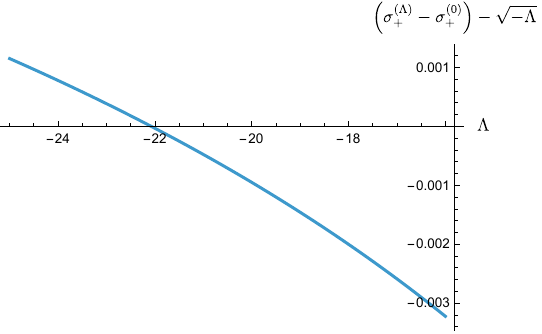}
\caption{The quantity  $\sigma^{(\Lambda)}_+ -  \sigma^{(0)}_+ - \sqrt{-\Lambda}$ for the annulus $A\left(10\right)$. Note that it becomes positive for $\Lambda \lessapprox -22.1$.}\label{fig:annulus}
\end{figure}

\begin{remark}\label{rem:infty}
For the annuli $A(\ell)$ of Theorem~\ref{thm:acounter} one has $\mathcal K_{A(\ell)}=1$, and by \eqref{eq:largealphaannulus}--\eqref{eq:smallalphaannulus} the upper radial branch satisfies $\sigma^{(\Lambda)}_+-\sigma^{(0)}_+\to\sqrt{-\Lambda}+\frac12$ in the iterated limit $\Lambda\to-\infty$, $\ell\to+\infty$.

At the same time, the subspace $\mathcal{U}\subset H^{1/2}(\partial A(\ell))$ of rotationally invariant boundary data is two-dimensional and invariant under $\DtN_\Lambda$, and $\sigma^{(\Lambda)}_\pm$ are precisely the ordered eigenvalues of the restriction $\DtN_\Lambda|_{\mathcal U}$.  As \eqref{eq:formHolderTheta} is an inequality between quadratic forms valid for \emph{all} boundary data, it can be restricted to $\mathcal U$, and the min--max principle applied within $\mathcal U$ gives, with  $\Lambda_1=\Lambda$ and $\Lambda_2=0$,
\[
\sigma^{(\Lambda)}_\pm-\sigma^{(0)}_\pm\le \Theta_{A(\ell)}(-\Lambda).
\]
Comparison with the displayed asymptotics shows that the additive constant $\frac{\mathcal{K}_\Omega}{2}$ in \eqref{eq:Kinf} cannot be improved, cf. Remark \ref{rem:compare}.
\end{remark}

\subsection{Proof of Theorem \ref{thm:bcounter}}

We start with two elementary estimates used in the proof of Theorem \ref{thm:bcounter}. 

For an annulus $A(\ell_1, \ell_2)$, $0<\ell_1<\ell_2$, let $-\Delta^{\Rob,\gamma_1,\gamma_2}_{A(\ell_1, \ell_2)}$ denote the Laplacian with boundary conditions $\partial_n U + \gamma_j U = 0$ on $C(\ell_j)$, $j=1,2$, given more explicitly in polar coordinates $(r,\theta)$ by $-\partial_r U + \gamma_1 U = 0$ at $r=\ell_1$, and $\partial_r U + \gamma_2 U = 0$ at $r=\ell_2$. Note that choosing $\gamma_j=0$ is equivalent to imposing the Neumann condition on the corresponding boundary.

We start with an estimate of the principal Robin eigenvalue for an annulus with a Neumann condition on the outer boundary.

\begin{lemma}\label{lem:bcounter-inner}
Fix $\ell>1$ and $\vartheta\in\left(0, \frac12\right)$. Assume that  $\gamma<0$. Then the principal eigenvalue 
\[
\lambda_1(\gamma):=\lambda_1\left(-\Delta^{\Rob,\gamma-\vartheta,0}_{A(1, \ell)}\right)
\]
of the mixed Robin/Neumann Laplacian in $A(1,\ell)$ satisfies
\[
\lambda_1\left(\gamma\right)>-\gamma^2
\]
for all sufficiently large $|\gamma|$.
\end{lemma}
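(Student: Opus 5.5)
The plan is to reduce to an explicit one-dimensional problem and then read off the large-$|\gamma|$ asymptotics from modified Bessel functions. Set $\beta:=\vartheta-\gamma=|\gamma|+\vartheta>0$. On $C(1)$ the boundary condition is then $\partial_r U=\beta U$, which is strongly attractive.

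\textbf{Step 1: radial reduction.} The principal eigenvalue of $-\Delta^{\Rob,\gamma-\vartheta,0}_{A(1,\ell)}$ is simple and has a positive eigenfunction. Since the problem is rotation invariant, that eigenfunction is radial. Testing the form \eqref{eq:robinform} with $U\equiv1$ gives a negative Rayleigh quotient, so $\lambda_1(\gamma)<0$; write $\lambda_1(\gamma)=-k^2$ with $k>0$. The eigenfunction is then
\[
f(r)=K_0(kr)+c\,I_0(kr),\qquad c=\frac{K_1(k\ell)}{I_1(k\ell)},
\]
where $c$ is fixed by the Neumann condition $f'(\ell)=0$. The inner condition $-f'(1)+(\gamma-\vartheta)f(1)=0$ becomes
\[
k\,\frac{K_1(k)-c\,I_1(k)}{K_0(k)+c\,I_0(k)}=\beta .
\]
Conversely, $-k^2$ is the principal eigenvalue exactly when $k$ is the largest positive root of this equation. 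This is the one-dimensional Robin--Sturm--Liouville characterisation, and it parallels the duality of Proposition~\ref{prop:DtNRduality}.

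\textbf{Step 2: asymptotics of the root.} As $\beta\to\infty$ one must have $k\to\infty$, since the left-hand side stays bounded on bounded sets of $k$. For large $k$, the terms involving $c$ are of relative size $O(\er^{-2k(\ell-1)})$. So the equation reads
\[
k\frac{K_1(k)}{K_0(k)}\bigl(1+O(\er^{-2k(\ell-1)})\bigr)=\beta .
\]
The standard expansion $\frac{K_1(x)}{K_0(x)}=1+\frac1{2x}+O(x^{-2})$ (cf.\ \cite[\S10]{dlmf}) gives $k+\tfrac12+O(k^{-1})=\beta$. Hence $k=\beta-\tfrac12+o(1)$, and
\[
k^2=\beta^2-\beta+O(1)=\gamma^2-(1-2\vartheta)|\gamma|+O(1).
\]

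\textbf{Step 3: conclusion.} Because $\vartheta<\tfrac12$, the coefficient $1-2\vartheta$ is positive. Therefore $k^2<\gamma^2$, that is $\lambda_1(\gamma)>-\gamma^2$, for all sufficiently large $|\gamma|$.

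\textbf{Expected main obstacle.} Step 2 needs care on two points. First, we must check that the largest root $k$ really tends to infinity and corresponds to the principal eigenvalue, rather than to some spurious root. Second, the exponentially small perturbation coming from the $I$-terms must not spoil the $\tfrac12$ correction. I would handle both by showing that $x\mapsto xK_1(x)/K_0(x)$ is increasing with derivative bounded below by a positive constant for large $x$; an implicit-function argument then controls the perturbation. Alternatively, one could avoid Bessel functions and prove the lower bound variationally. The idea would be to write $f(1)^2=-\int_1^\ell (f^2\phi)'\,\dr r$ for a cutoff $\phi$ built from $\er^{-2\beta'(r-1)}/r$. The $1/r$ weight is what produces the curvature gain $+\beta$, in the spirit of Proposition~\ref{prop:sub} applied on the non-convex inner boundary.
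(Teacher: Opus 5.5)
Your proof is correct and follows the same route as the paper. Both arguments use a radial reduction, separation of variables with $K_0$ and $I_0$, and the large-argument Bessel asymptotics giving $\sqrt{-\lambda_1(\gamma)}=|\gamma|+\vartheta-\frac12+o(1)$; your extra checks (that the root $k\to\infty$ and that the $I$-terms are exponentially small) fill in what the paper treats as routine. One cosmetic slip: the outward normal on $C(1)$ is $-\partial_r$, so the inner condition reads $\partial_r U=-\beta U$ rather than $\partial_r U=\beta U$, although the characteristic equation you actually solve is the correct one.
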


\begin{proof}
The principal eigenvalue is negative and a corresponding eigenfunction is radial. Separating the variables and using the asymptotics of the modified Bessel functions, similarly to the proof of Theorem \ref{thm:acounter},  we get the asymptotics
\[
\sqrt{-\lambda_1(\gamma)} = -\gamma+\vartheta-\frac12+O\left(|\gamma|^{-1}\right)\qquad\text{as }\gamma\to -\infty.
\]
As $\vartheta<\frac12$, we have $\sqrt{-\lambda_1(\gamma)}<-\gamma$ for all sufficiently
large $|\gamma|$, and the result follows.
\end{proof}

\begin{lemma}\label{lem:bcounter-outer}
For $\tau>0$, consider a mixed Neumann/Robin Laplacian $-\Delta_{A(3/4,1)}^{\Rob, 0,-\tau}$ in the outer annulus $A(3/4,1)$. There exists a constant $P>0$ such that the number of its non-positive eigenvalues satisfies
\begin{equation}\label{eq:bcounter-outer-count}
\mathcal{N}^{-\Delta_{A(3/4,1)}^{\Rob, 0,-\tau}}(0)
\le P(\tau+1)
\end{equation}
\end{lemma}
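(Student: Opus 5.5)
We separate variables in polar coordinates on $A(3/4,1)$ and reduce the count to one-dimensional Sturm--Liouville problems, indexed by the angular frequency. Writing $U(r,\theta)=f(r)\er^{\ir n\theta}$, $n\in\mathbb Z$, the quadratic form of $-\Delta_{A(3/4,1)}^{\Rob,0,-\tau}$ splits as a direct sum of the forms
\[
\mathfrak q_n[f]=\int_{3/4}^1\left(|f'|^2+\frac{n^2}{r^2}|f|^2\right)r\,\dr r-\tau|f(1)|^2,
\]
with no boundary term at $r=3/4$ (Neumann). Hence $\mathcal N(0)=\sum_{n\in\mathbb Z}\mathcal N^{(n)}(0)$, where $\mathcal N^{(n)}$ is the counting function of the radial operator associated with $\mathfrak q_n$ in $L^2((3/4,1),r\,\dr r)$.

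The first step is to bound each $\mathcal N^{(n)}(0)$ by an absolute constant. The form $\mathfrak q_n$ is a perturbation of a form with Neumann conditions at both ends by a single rank-one boundary term $-\tau|f(1)|^2$. Consider the subspace $\{f: f(1)=0\}$, which has codimension one. On it, $\mathfrak q_n\ge \int|f'|^2r\,\dr r$, and this is strictly positive for $f\ne0$, because a function with $f'\equiv0$ and $f(1)=0$ vanishes identically. By Lemma \ref{lem:glazman} (in its strict form), any subspace on which $\mathfrak q_n\le0$ meets this codimension-one subspace only in $\{0\}$. Therefore $\mathcal N^{(n)}(0)\le1$ for every $n$.

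The second step, which is the only real point, is to show that $\mathcal N^{(n)}(0)=0$ once $|n|\gtrsim\tau+1$. This needs a one-dimensional trace inequality with explicit constants. For $f\in H^1(3/4,1)$ and any $\varepsilon\in(0,1/4]$, we have
\[
|f(1)|^2\le \varepsilon\int_{3/4}^1|f'|^2\,\dr r+\frac{C}{\varepsilon}\int_{3/4}^1|f|^2\,\dr r .
\]
For example, this follows by integrating $|f(1)|^2\le 2|f(r)|^2+2\left|\int_r^1 f'\right|^2$ over $r\in(1-\varepsilon,1)$. Since $r\in[3/4,1]$, the weight $r$ and the factor $1/r^2$ are comparable to constants. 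Taking $\varepsilon=\min\{1/4,\,c/\tau\}$, we get
\[
\tau|f(1)|^2\le \frac12\int|f'|^2r\,\dr r+C'(\tau^2+1)\int|f|^2 r\,\dr r.
\]
Thus $\mathfrak q_n[f]>0$ for all $f\ne0$ as soon as $n^2>C'(\tau^2+1)$, that is, for $|n|>C''(\tau+1)$.

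Finally, we combine the two steps. Only the indices $|n|\le C''(\tau+1)$ contribute, and each contributes at most one eigenvalue. So $\mathcal N(0)\le 2C''(\tau+1)+1\le P(\tau+1)$, which gives \eqref{eq:bcounter-outer-count}.
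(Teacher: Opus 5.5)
Your proof is correct, but it takes a different route from the paper's. The paper uses the mixed-problem analogue of the duality \eqref{eq:DtNRdualityN} to rewrite $\mathcal N(0)$ as $\N^{\tilde{\DtN}_0}(\tau)$, where $\tilde{\DtN}_0$ is the Steklov/Neumann Dirichlet-to-Neumann operator on $C(1)$. It then invokes Weyl's law $\N^{\tilde{\DtN}_0}(\tau)=c\tau+O(1)$, and handles bounded $\tau$ separately by taking a maximum over $[0,\tau_0]$.

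You instead exploit rotational symmetry. The Robin term is rank one in each Fourier mode, so each mode contributes at most one non-positive eigenvalue. Your explicit one-dimensional trace inequality then removes all modes with $|n|\gtrsim\tau+1$. In the paper's language, you are proving by hand that the Fourier-mode eigenvalues of $\tilde{\DtN}_0$ grow at least linearly in $|n|$, which is exactly the Weyl-type upper bound the paper quotes.

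What each route buys:
\begin{itemize}
\item Your argument is elementary and self-contained. It is uniform in $\tau>0$, so bounded $\tau$ needs no separate treatment, and it yields an explicit $P$.
\item The paper's argument is shorter and never uses the symmetry of the annulus, so it would work for any smooth outer collar.
\end{itemize}
The mode-by-mode rank-one observation is the decisive step in your version. A trace inequality alone, applied in the bulk, only gives $\tau\|u\|^2_{L^2(C(1))}\le\frac12\|\nabla U\|^2+C\tau^2\|U\|^2$. Counting Neumann eigenvalues below $C\tau^2$ would then give $O(\tau^2)$ rather than $O(\tau)$.

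Two cosmetic points:
\begin{itemize}
\item The version of Lemma~\ref{lem:glazman} you need is the non-strict one, counting eigenvalues $\le0$ via subspaces on which $\mathfrak q_n\le0$. Strictness enters only through the positivity of $\mathfrak q_n$ on $\{f(1)=0\}\setminus\{0\}$.
\item The derivation you sketch actually gives $|f(1)|^2\le 2\varepsilon\int|f'|^2+\frac{2}{\varepsilon}\int|f|^2$, which is the same inequality after renaming constants.
\end{itemize}
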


\begin{proof}
We consider the mixed Steklov/Neumann spectral problem in the annulus $A(3/4, 1)$, with the Steklov condition on $C(1)$ and the Neumann condition on $C(3/4)$, denoting the corresponding Dirichlet-to-Neumann operator on $C(1)$ by $\tilde{\DtN}_0$. By the mixed problem analogue of \eqref{eq:DtNRdualityN}, we have
\[
\mathcal{N}^{-\Delta_{A(3/4,1)}^{\Rob, 0,-\tau}}(0) = \N^{\tilde{\DtN}_0}(\tau),
\] 
with the latter counting function behaving as $c\tau+O(1)$ as $\tau\to+\infty$ by Weyl's law; therefore, 
\eqref{eq:bcounter-outer-count} holds with $P=c+1$ for $\tau$ exceeding some $\tau_0>0$. Setting 
\[
P=\max\left\{c+1, \max_{\tau\in[0,\tau_0]}\frac{\N^{\tilde{\DtN}_0}(\tau)}{\tau+1}\right\}
\]
completes the proof. 
\end{proof}

\begin{proof}[Proof of Theorem \ref{thm:bcounter}]
Set
\begin{equation}\label{eq:zetaetc}
\zeta:=\er^{-1}, \qquad \ell:=\frac{\epsilon\zeta}{r_\epsilon}=\frac{\zeta}{\delta}=\er^3, \qquad\vartheta:=\frac25,
\end{equation}
see also \eqref{eq:delta}.
Thus
\[
\frac{1}{\log\ell} = \frac13 < \vartheta < \frac12.
\]
Choose a constant $q_0>0$ so large that Lemma
\ref{lem:bcounter-inner} applies with $\gamma=\gamma_0=-q_0$, and the values of $\ell$ and
$\vartheta$ given above, and define
\begin{equation}\label{eq:bcounter-parameters}
q_\epsilon:=\frac{q_0}{r_\epsilon}, \qquad
\beta_\epsilon:=\frac{\vartheta}{r_\epsilon}, \qquad
\eta_\epsilon:=q_\epsilon+\beta_\epsilon = \frac{q_0+\vartheta}{r_\epsilon}=\frac{-\gamma_0+\vartheta}{r_\epsilon}.
\end{equation}

We first prove that
\begin{equation}\label{eq:bcounter-zero-upper}
\sigma_{m_\epsilon}^{(0)}(\Omega_\epsilon) \le \beta_\epsilon.
\end{equation}
For $j=1,\dots,m_\epsilon$, define $U_{j,\epsilon}\in H^1(\Omega_\epsilon)$ by
\[
U_{j,\epsilon}(x) :=
\begin{cases}
\displaystyle
\frac{\log\left(\frac{\epsilon\zeta}{|x-x_j|}\right)}{\log\ell},
\qquad&\text{if } r_\epsilon<|x-x_j|<\epsilon\zeta,\\
0,\qquad&\text{otherwise}.
\end{cases}
\]
Because the disks $B(x_j,\epsilon)$ are pairwise disjoint and $\zeta<1$, the supports of these functions are pairwise disjoint.
Moreover,
\[
U_{j,\epsilon}(x)=1, \qquad x\in C(x_j,r_\epsilon),
\]
and its trace vanishes on every other component of
$\partial\Omega_\epsilon$. A direct calculation in polar coordinates
gives
\[
\|\nabla U_{j,\epsilon}\|_{L^2(\Omega_\epsilon)}^2 = \frac{2\pi}{\log\ell},
\qquad
\left\| U_{j,\epsilon}|_{\partial\Omega_\epsilon} \right\|_{L^2(\partial\Omega_\epsilon)}^2 = 2\pi r_\epsilon.
\]
Therefore, by
\eqref{eq:robinform},
\[
\mathfrak r_{-\beta_\epsilon}[U_{j,\epsilon}] = \frac{2\pi}{\log\ell} - 2\pi\beta_\epsilon r_\epsilon
= 2\pi \left(\frac1{\log\ell}-\vartheta\right) < 0.
\]
The same strict inequality holds for every non-zero element of
\[
\mathcal L_\epsilon :=
\operatorname{span}
\{U_{1,\epsilon},\dots,U_{m_\epsilon,\epsilon}\},
\qquad \dim\mathcal L_\epsilon=m_\epsilon,
\]
because the supports are pairwise disjoint. By  Glazman's Lemma,
\[
\mathcal N^{-\Delta_{\Omega_\epsilon}^{\Rob,-\beta_\epsilon}}(0) \ge m_\epsilon.
\]
Using the Robin--Dirichlet-to-Neumann counting identity \eqref{eq:DtNRdualityN} with $\Lambda=0$ and
$\sigma=\beta_\epsilon$, we obtain
\[
\mathcal N^{\DtN_0}(\beta_\epsilon) \ge m_\epsilon,
\]
which is equivalent to \eqref{eq:bcounter-zero-upper}.

We next show that, for all sufficiently small $\epsilon$,
\begin{equation}\label{eq:bcounter-negative-lower}
\sigma_{m_\epsilon}^{(-q_\epsilon^2)}(\Omega_\epsilon) > \eta_\epsilon.
\end{equation}
In order to do that, consider the Robin Laplacian $-\Delta_{\Omega_\epsilon}^{\Rob,-\eta_\epsilon}$.  We introduce artificial boundaries  
\[
\Gamma_\epsilon:= \bigcup_{j=1}^{m_\epsilon} C(x_j,\epsilon\zeta)\cup C\left(\frac34\right),
\]
which decompose $\Omega_\epsilon\setminus\Gamma_\epsilon$ as
\[
\Omega_\epsilon\setminus\Gamma_\epsilon = \bigsqcup_{j=1}^{m_\epsilon} A(x_j,\epsilon\delta,\epsilon\zeta)\sqcup A\left(\frac34,1\right) \sqcup \tilde{\Omega}_\epsilon, 
\]
where $\tilde{\Omega}_\epsilon$ is a domain with $\partial\tilde{\Omega}_\epsilon = \Gamma_\epsilon$; clearly
\[
\partial\left(\Omega_\epsilon\setminus\Gamma_\epsilon\right)=\partial\Omega_\epsilon \sqcup \Gamma_\epsilon.
\]

We consider the mixed Robin--Neumann spectral problem for the Laplacian in $\Omega_\epsilon\setminus\Gamma_\epsilon$ by keeping the Robin boundary conditions with parameter $-\eta_\epsilon$ on the original boundaries $\partial\Omega_\epsilon$ and imposing the Neumann conditions on the additional boundaries $\Gamma_\epsilon$; denote the resulting operator by 
\[
-\Delta^{\Rob, -\eta_\epsilon, 0}_{\Omega_\epsilon\setminus\Gamma_\epsilon}.
\]
The standard Neumann bracketing estimate gives
\begin{equation}\label{eq:bcounter-bracketing}
\mathcal N^{-\Delta_{\Omega_\epsilon}^{\Rob,-\eta_\epsilon}} \left(-q_\epsilon^2\right)
\le \mathcal N^{-\Delta^{\Rob, -\eta_\epsilon, 0}_{\Omega_\epsilon\setminus\Gamma_\epsilon}}\left(-q_\epsilon^2\right).
\end{equation}
The operator $-\Delta^{\Rob, -\eta_\epsilon, 0}_{\Omega_\epsilon\setminus\Gamma_\epsilon}$ is the direct sum of the resulting operators on separated pieces,
\[
-\Delta^{\Rob, -\eta_\epsilon, 0}_{\Omega_\epsilon\setminus\Gamma_\epsilon} = \bigoplus_{j=1}^{m_\epsilon} \left(-\Delta_{A(x_j, \epsilon\delta,\epsilon\zeta)}^{\Rob, -\eta_\epsilon, 0}\right) \oplus \left(-\Delta_{A(3/4,1)}^{\Rob, 0, -\eta_\epsilon}\right)\oplus \left(-\Delta_{\tilde{\Omega}_\epsilon}^{\Neu}\right),
\]
and its counting function is therefore the sum of individual counting functions.

There are three types of pieces. Trivially, as the Neumann Laplacian is non-negative,
\[
\mathcal N^{-\Delta_{\tilde{\Omega}_\epsilon}}\left(-q_\epsilon^2\right) = 0.
\]

Secondly, in order to deal with each inner annulus $A(x_j, \epsilon\delta,\epsilon\zeta)$, we shift its center to the origin and rescale it by the factor $\frac{1}{\epsilon\delta}=\frac{1}{r_\epsilon}$, which transforms it into $A\left(1,\frac{\zeta}{\delta}\right)=A(1,\ell)$, see  \eqref{eq:zetaetc}. The Neumann condition is not affected by the rescaling, and the standard Robin rescaling law \cite[\S3.1.3]{LMP} implies, using \eqref{eq:bcounter-parameters},
\[
\lambda_1\left(-\Delta_{A(x_j, \epsilon\delta,\epsilon\zeta)}^{\Rob, -\eta_\epsilon, 0}\right) =\frac{1}{r_\epsilon^2}\lambda_1\left(-\Delta_{A(1,\ell)}^{\Rob, -\eta_\epsilon r_\epsilon, 0}\right)
=\frac{1}{r_\epsilon^2}\lambda_1\left(-\Delta_{A(1,\ell)}^{\Rob, \gamma_0-\vartheta, 0}\right).
\]
Estimating the last expression using Lemma \ref{lem:bcounter-inner}, we get
\[
\lambda_1\left(-\Delta_{A(x_j, \epsilon\delta,\epsilon\zeta)}^{\Rob, -\eta_\epsilon, 0}\right) > -\frac{1}{r_\epsilon^2} \gamma_0^2 = -q_\epsilon^2,
\]
and therefore
\[
\mathcal N^{-\Delta_{A(x_j, \epsilon\delta,\epsilon\zeta)}^{\Rob, -\eta_\epsilon, 0}}
\left(-q_\epsilon^2\right) = 0, \qquad
j=1,\dots,m_\epsilon.
\]

Thirdly, by Lemma \ref{lem:bcounter-outer}, 
\begin{equation}\label{eq:bcounter-outer-bound}
\mathcal N^{-\Delta_{A(3/4,1)}^{\Rob, 0, -\eta_\epsilon}} \left(-q_\epsilon^2\right)
\le \mathcal N^{-\Delta_{A(3/4,1)}^{\Rob, 0, -\eta_\epsilon}}(0)
\le P\left(\eta_\epsilon+1\right) = P\left(\frac{-\gamma_0+\vartheta}{\epsilon\delta}+1\right)
\le \frac{P_1}{\epsilon}
\end{equation}
for all sufficiently small $\epsilon$, with a constant $P_1>0$ independent of $\epsilon$.

Combining \eqref{eq:bcounter-bracketing}--\eqref{eq:bcounter-outer-bound}, we conclude that
\[
\mathcal N^{-\Delta_{\Omega_\epsilon}^{\Rob,-\eta_\epsilon}}
\left(-q_\epsilon^2\right)
\le
\frac{P_1}{\epsilon}.
\]
On the other hand, by \eqref{eq:bcounter-packing}, $m_\epsilon\ge\frac{1}{16\epsilon^2}$, 
and consequently, for all sufficiently small $\epsilon$,
\[
\mathcal N^{-\Delta_{\Omega_\epsilon}^{\Rob,-\eta_\epsilon}} \left(-q_\epsilon^2\right) < m_\epsilon.
\]
Applying \eqref{eq:DtNRdualityN} with $\Lambda=-q_\epsilon^2$ and $\sigma=\eta_\epsilon$, we obtain
\[
\mathcal N^{\DtN_{-q_\epsilon^2}}(\eta_\epsilon) < m_\epsilon,
\]
which proves \eqref{eq:bcounter-negative-lower}.

Combining \eqref{eq:bcounter-zero-upper}, \eqref{eq:bcounter-negative-lower}, and \eqref{eq:bcounter-parameters}, we arrive at
\[
\sigma_{m_\epsilon}^{(-q_\epsilon^2)}(\Omega_\epsilon) - \sigma_{m_\epsilon}^{(0)}(\Omega_\epsilon) > \eta_\epsilon-\beta_\epsilon= q_\epsilon,
\]
completing the proof of  Theorem \ref{thm:bcounter}.
\end{proof}

\section{Generalisations and extensions}\label{sec:extensions}
\subsection{Consequence for Robin eigenvalues}

Fix $k\in\mathbb N$, and let the number $\gamma_1$, $\gamma_2$ satisfy
\begin{equation}\label{eq:robincondition}
\gamma_1\le \gamma_2\le-\sigma_k^{(0)}(\Omega).
\end{equation}
Then, by the Robin--Dirichlet-to-Neumann duality, the numbers $\Lambda_j:=\lambda_k^{\Rob,\gamma_j}(\Omega)$,
$j=1,2$, satisfy $\Lambda_1\le\Lambda_2\le0$, and are in the range of applicability  of Theorems~\ref{thm:convex},
\ref{thm:C2}, and \ref{thm:main}. 

\begin{prop}\label{prop:robin}
Let $\Omega\subset\R^d$ be a bounded domain, let $k\in\mathbb N$, and let
$\gamma_1\le\gamma_2$ satisfy \eqref{eq:robincondition}. If $\Omega$ has $C^2$ boundary, then
\[
\lambda_k^{\Rob,\gamma_2}(\Omega)-\lambda_k^{\Rob,\gamma_1}(\Omega) \ge \max\left\{
\frac{\left(\gamma_2-\gamma_1\right)^2}{\mathcal{C}_\Omega^{\,2}},
\left(\gamma_2-\gamma_1\right)^2-\mathcal{K}_\Omega\left(\gamma_2-\gamma_1\right)\right\},
\]
with $\mathcal{C}_\Omega$ and $\mathcal{K}_\Omega$ as in \eqref{eq:COmega} and \eqref{eq:KOmega}.
If $\Omega$ is convex, or has $C^2$ weakly mean-convex boundary, then
\[
\lambda_k^{\Rob,\gamma_2}(\Omega)-\lambda_k^{\Rob,\gamma_1}(\Omega) \ge  \left(\gamma_2-\gamma_1\right)^2.
\]
\end{prop}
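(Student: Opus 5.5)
Set $\Lambda_j:=\lambda_k^{\Rob,\gamma_j}(\Omega)$, $j=1,2$. The plan is to translate the Dirichlet-to-Neumann comparison into a statement about Robin eigenvalues via the duality of Proposition~\ref{prop:DtNRduality}. We then invert the resulting inequality using the fact that $\Lambda\mapsto\sigma_k^{(\Lambda)}$ is monotone decreasing.

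\emph{Step 1: locating $\Lambda_1,\Lambda_2$.} Since Robin eigenvalues are non-decreasing in the parameter, $\Lambda_1\le\Lambda_2$. We next show $\Lambda_2\le 0$. Using \eqref{eq:DtNRdualityN} with $\Lambda=0$ and $\sigma=-\gamma_2\ge\sigma_k^{(0)}$, we get $\mathcal N^{-\Delta^{\Rob,\gamma_2}}(0)=\mathcal N^{\DtN_0}(-\gamma_2)\ge k$, hence $\Lambda_2\le0$. For $\Lambda\le 0<\lambda_1^\Dir$, the second part of Proposition~\ref{prop:DtNRduality} should give $\sigma_k^{(\Lambda_j)}=-\gamma_j$. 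More robustly, this follows by counting: \eqref{eq:DtNRdualityN} gives $\mathcal N^{\DtN_{\Lambda_j}}(-\gamma_j)\ge k$ together with the strict version $\mathcal N^{\DtN_{\Lambda_j}}(-\gamma_j-0)<k$, and the strict monotonicity of Robin eigenvalues in $\gamma$ rules out a $k$-th eigenvalue strictly below $-\gamma_j$. This identification, including the multiplicity bookkeeping, is the only delicate point. If it becomes awkward, I would use the strict-inequality form of Glazman's Lemma~\ref{lem:glazman} on both sides.

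\emph{Step 2: applying the comparison.} Set $t:=\Lambda_2-\Lambda_1\ge0$ and $g:=\gamma_2-\gamma_1\ge0$. Then $\sigma_k^{(\Lambda_1)}-\sigma_k^{(\Lambda_2)}=g$. Corollary~\ref{cor:C2-eigenvalues} gives $g\le\mathcal C_\Omega\sqrt t$, i.e.\ $t\ge g^2/\mathcal C_\Omega^2$. Corollary~\ref{cor:main-eigenvalues} gives $g\le\Theta_\Omega(t)$; since $\Theta_\Omega$ is increasing and $\Theta_\Omega(t)$ is the non-negative root of $\alpha^2-\mathcal K_\Omega\alpha-t=0$, we obtain $g^2-\mathcal K_\Omega g\le\Theta_\Omega(t)^2-\mathcal K_\Omega\Theta_\Omega(t)=t$. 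Indeed, $\alpha\mapsto\alpha^2-\mathcal K_\Omega\alpha$ is increasing for $\alpha\ge\mathcal K_\Omega/2$, and if $g<\mathcal K_\Omega$ the bound is trivial because its left-hand side is negative while $t\ge0$. Taking the maximum of the two bounds gives the first claim.

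\emph{Step 3: the convex and weakly mean-convex case.} Here Corollary~\ref{cor:convex-eigenvalues} (respectively Corollary~\ref{cor:meanconvex}) gives $g\le\sqrt t$, i.e.\ $t\ge g^2$, which is the second claim.
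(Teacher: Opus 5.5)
Your proposal is correct and is essentially the paper's proof: you place $\Lambda_j=\lambda_k^{\Rob,\gamma_j}(\Omega)$ in $(-\infty,0]$ with $\Lambda_1\le\Lambda_2$, identify $\sigma_k^{(\Lambda_j)}=-\gamma_j$ through the Robin--Dirichlet-to-Neumann duality, and invert Corollaries~\ref{cor:C2-eigenvalues}, \ref{cor:main-eigenvalues} and \ref{cor:convex-eigenvalues} (or \ref{cor:meanconvex}). You are more explicit than the paper on two points it only asserts: the identification $\sigma_k^{(\Lambda_j)}=-\gamma_j$, where the strict form of Lemma~\ref{lem:glazman} applied to both counting functions already suffices without the extra appeal to strict monotonicity in $\gamma$, and the case split $g<\mathcal K_\Omega$ versus $g\ge\mathcal K_\Omega$ when inverting $\Theta_\Omega$.
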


\begin{proof}
With $\Lambda_j$ as above, \eqref{eq:DtNRdualityN} and the equality of multiplicities in
Proposition~\ref{prop:DtNRduality} give $\sigma_k^{(\Lambda_j)}(\Omega)=-\gamma_j$. 
Thus $\sigma_k^{(\Lambda_1)}-\sigma_k^{(\Lambda_2)}=\gamma_2-\gamma_1\ge0$, and the three
bounds follow from Corollaries~\ref{cor:C2-eigenvalues}, \ref{cor:main-eigenvalues} and
\ref{cor:convex-eigenvalues} respectively, in the last case after inverting the increasing
function $\Theta_\Omega$ of \eqref{eq:Theta}. \end{proof}

\begin{remark}
The two $C^2$ bounds are
complementary in the same way as Theorems~\ref{thm:C2} and \ref{thm:main}, cf.\
Remark~\ref{rem:compare}: only the first is nontrivial for $\gamma_2-\gamma_1<\mathcal{K}_\Omega$,
whereas the second retains the leading coefficient one, and that coefficient cannot be improved.
Indeed, for a bounded domain with smooth boundary and fixed $k$ one has
$\lambda_k^{\Rob,\gamma}(\Omega)=-\gamma^2+O\left(|\gamma|\right)$ as $\gamma\to-\infty$
\cite{LevitinParnovski, PankrashkinPopoff, HelfferKachmar}, 
so that for fixed $\gamma_2$ both
sides of the second bound of Proposition~\ref{prop:robin} are $\gamma_1^2(1+o(1))$.
\end{remark}

\subsection{Dirichlet-to-Neumann maps on Riemannian manifolds}

All the preceding arguments have a natural Riemannian version.  Let $(X,g)$ be a compact connected smooth Riemannian manifold of dimension $d\geq2$ with non-empty smooth boundary, write $X^\circ:=X\setminus\partial X$, and put
\[
\rho(x):=\dist_g(x,\partial X).
\]
For $\Lambda\leq0$ and $u\in H^{1/2}(\partial X)$, let $\mathcal E^g_\Lambda u\in H^1(X)$ be the solution of
\[
-\Delta_g U-\Lambda U=0\quad\text{in }X,\qquad U|_{\partial X}=u,
\]
and let $\DtN_\Lambda$ denote the corresponding Dirichlet-to-Neumann map.  Its quadratic form on $H^{1/2}(\partial X)$ is, similarly to \eqref{eq:quadDtN},
\[
\mathfrak d_{\Lambda}[u]
:=\int_X \left(|\nabla \mathcal E^g_\Lambda u|_g^2-\Lambda|\mathcal E^g_\Lambda u|^2\right)\,\dr\mathrm{v}_g.
\]

By analogy with \eqref{eq:MOmega}, set
\begin{equation}\label{eq:riemannian-lower}
\mathcal{M}_{g}:=\inf\left\{m\ge0:\ -\Delta_g\rho\geq-m\,\dr \mathrm{v}_g\ \text{ in }\mathscr D'(X^\circ)\right\},
\end{equation}
so that $-\Delta_g\rho\geq-\mathcal{M}_{g}\,\dr \mathrm{v}_g$, and set
\[
\mathcal{A}_{g}:=\sup_{0\neq u\in H^{1/2}(\partial X)}
\frac{\|\mathcal E^g_0u\|^2_{L^2(X)}}{\|u\|^2_{L^2(\partial X)}}.
\]  
The finiteness of $\mathcal{A}_{g}$ follows from the same  
argument as used in  the Euclidean case above. 

\begin{theorem}\label{thm:riemannian-general}
Let $(X,g)$ be as above and let $\Lambda_1\leq\Lambda_2\leq0$. Then
\begin{equation}\label{eq:riemannian-form}
0\leq \DtN_{\Lambda_1}-\DtN_{\Lambda_2} \leq \mathcal{C}_{g}\sqrt{\Lambda_2-\Lambda_1}\,I
\end{equation}
in the sense of quadratic forms on $H^{1/2}(\partial X)$, where one may take
\[
\mathcal{C}_{g}:=1+\mathcal{M}_{g}\mathcal{A}_{g}.
\]
\end{theorem}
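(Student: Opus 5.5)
The plan is to transplant the Euclidean argument of Theorem~\ref{thm:C2} verbatim, with $\mathcal M_\Omega$ and $\mathcal A_\Omega$ replaced by $\mathcal M_g$ and $\mathcal A_g$. The lower bound $0\le\DtN_{\Lambda_1}-\DtN_{\Lambda_2}$ comes from the Dirichlet principle on $X$. For $\Lambda<\lambda_1^\Dir(X)$, and in particular $\Lambda\le0$, we have $\mathfrak d_\Lambda[u]=\min\{\|\nabla W\|_g^2-\Lambda\|W\|^2: W|_{\partial X}=u\}$. This follows by the same coercivity argument as \eqref{eq:DirPr}, and the minimised functional is monotone in $\Lambda$.

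For the upper bound, fix $u$, write $q_j=\sqrt{-\Lambda_j}$, $s=\Lambda_2-\Lambda_1$, $U_{q_2}=\mathcal E^g_{\Lambda_2}u$, and take $V=\er^{-\alpha\rho}U_{q_2}$ with $\alpha=\sqrt s$. The key step is the Riemannian analogue of Proposition~\ref{prop:sub}:
\[
\int_X|\nabla V|_g^2+\alpha^2\|V\|^2\le\int_X|\nabla U|_g^2+\alpha\|u\|^2_{L^2(\partial X)}+\alpha\mathcal M_g\|V\|^2.
\]
To prove it, I would repeat the formal computation with $w=\er^{-2\alpha\rho}$ and $g=|U|^2$. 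It uses $|\nabla\rho|_g=1$ a.e., the expansion $|\nabla V|_g^2=w(|\nabla U|^2-\alpha\langle\nabla g,\nabla\rho\rangle_g+\alpha^2 g)$, and the Green formula
\[
\int_X\varphi\,\dr\mu_\rho=\int_X\langle\nabla\varphi,\nabla\rho\rangle_g+\int_{\partial X}\varphi\,\dr S
\]
for Lipschitz $\varphi$, where $\mu_\rho=-\Delta_g\rho$. Inserting $\mu_\rho\ge-\mathcal M_g\,\dr\mathrm v_g$ from \eqref{eq:riemannian-lower} then gives the inequality. Combining it with the Dirichlet principle for $\Lambda_1$, and with $|V|\le|U_{q_2}|$, reproduces \eqref{eq:formgeneral-intermediate} and then \eqref{eq:formgeneral-intermediate-C2} with $\mathcal M_g$ in place of $\mathcal K_\Omega$:
\[
\mathfrak d_{\Lambda_1}[u]-\mathfrak d_{\Lambda_2}[u]\le\sqrt s\,\|u\|^2+\sqrt s\,\mathcal M_g\|U_{q_2}\|^2.
\]

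Next comes the bound $\|U_{q_2}\|^2\le\mathcal A_g\|u\|^2$. This follows from the identity $U_q=-\Delta_g^\Dir(-\Delta_g^\Dir+q^2)^{-1}U_0$, which is a contraction in $L^2$ by the spectral theorem, exactly as in \eqref{eq:L2extension}. Finiteness of $\mathcal A_g$ comes from the torsion identity \eqref{eq:torsionidentity} on $X$: Green's formula and $\Delta_g|U_0|^2=2|\nabla U_0|^2$ hold verbatim, and $\psi\in C^1(X)$ by elliptic regularity on a smooth compact manifold. This gives the constant $\mathcal C_g=1+\mathcal M_g\mathcal A_g$. If $\mathcal M_g=\infty$ the statement is vacuous, so we may assume $\mathcal M_g<\infty$. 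In fact, in a collar $\{\rho<\varepsilon\}$ the function $\rho$ is smooth, and away from it $-\Delta_g\rho$ is bounded below by Laplacian comparison (as in \cite{LewisLiLi}, Riemannian version), so $\mathcal M_g$ is finite for smooth compact $X$.

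The main obstacle is justifying the Green formula for $\rho$ on $X$. The plan is to copy the $C^2$ case. Since $\partial X$ is smooth and compact, $\rho$ is smooth in a collar $\{\rho<\varepsilon\}$ with $\nabla\rho=-n$ on $\partial X$, so $\mu_\rho$ has a bounded density there. On the compact set $\{\rho\ge\varepsilon/2\}$, the lower bound makes $\mu_\rho+\mathcal M_g\,\dr\mathrm v_g$ a nonnegative distribution, hence a Radon measure of finite mass. Therefore $\mu_\rho$ is a finite signed measure. Testing against $\varphi\chi_t$, with cutoffs $\chi_t$ depending only on $\rho$ and equal to $1$ on $\{\rho>t\}$, and letting $t\to0^+$, the boundary term arises from the smooth collar by the classical divergence theorem. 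The extension from smooth $U$ to $H^1(X)$ is then by density, since $\er^{-\alpha\rho}$ is Lipschitz and hence an $H^1$ multiplier.
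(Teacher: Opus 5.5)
Your proposal is correct and follows the paper's proof essentially step for step. The ingredients are the same: the Riemannian substitution inequality for $V=\er^{-\alpha\rho}U$, obtained from the Green formula for $\rho$ via the smooth-collar cutoff argument of the $C^2$ Euclidean case; the choice $\alpha=\sqrt{s}$ in the Dirichlet principle; and the resolvent contraction bound $\|U_{q_2}\|^2\le\mathcal{A}_{g}\|u\|^2$. Your remarks on the finiteness of $\mathcal{M}_{g}$ (Laplacian comparison) and of $\mathcal{A}_{g}$ (torsion identity) fill in details that the paper only alludes to.
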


\begin{proof}
Only the notation changes from the Euclidean proof.  Put $\mu_\rho=-\Delta_g\rho$ and, for $\gamma\in\mathbb R$, set
\[
\mathfrak r^g_\gamma[U] :=\int_X|\nabla U|_g^2\,\dr \mathrm{v}_g +\gamma\int_{\partial X}|U|^2\,\dr S_g,
\qquad U\in H^1(X).
\]
One has $|\nabla\rho|_g=1$ almost everywhere, and the smooth boundary collar gives the boundary term $\partial_n\rho=-1$.  The cutoff argument used in the $C^2$ Euclidean case yields the Riemannian analogue of \eqref{eq:Greenrho}.  Hence, for $V=\er^{-\alpha\rho}U$, the calculation in Proposition~\ref{prop:sub} gives
\[
\mathfrak r^g_{\gamma-\alpha}[V]+\alpha^2\|V\|^2_{L^2(X)} \leq \mathfrak r^g_\gamma[U]+\alpha \mathcal{M}_{g}\|V\|^2_{L^2(X)}.
\]
The Dirichlet principle then gives \eqref{eq:riemannian-form} exactly as in \eqref{eq:formgeneral-intermediate}.  The uniform estimate for the $L^2(X)$ norm of the $\Lambda_2$-harmonic extension follows from the Riemannian Dirichlet Laplacian by the same resolvent argument as in \eqref{eq:L2extension}.
\end{proof}

\begin{remark}\label{rem:riemannian-theta}
The other choice of $\alpha$ made in \S\ref{sec:proofconvex} is available verbatim in this setting as well: taking $\alpha$ to be the non-negative root of $\alpha^2-\mathcal{M}_{M,g}\alpha=\Lambda_2-\Lambda_1$ gives the Riemannian analogue of Theorem~\ref{thm:main},
\[
0\leq \DtN_{\Lambda_1}-\DtN_{\Lambda_2} \leq \frac{\mathcal{M}_{g}+\sqrt{\mathcal{M}_{g}^2+4\left(\Lambda_2-\Lambda_1\right)}}{2}\,I
\leq \left(\sqrt{\Lambda_2-\Lambda_1}+\mathcal{M}_{g}\right)I,
\]
in which the constant $\mathcal{A}_{g}$ does not appear.
\end{remark}

\begin{corollary}\label{cor:riemannian-eigenvalues}
Under the assumptions of Theorem~\ref{thm:riemannian-general}, if $\sigma_k^{(\Lambda)}(X,g)$ denote the ordered Dirichlet-to-Neumann eigenvalues, then
\[
0\leq \sigma_k^{(\Lambda_1)}(X,g)-\sigma_k^{(\Lambda_2)}(X,g) \leq \mathcal{C}_{g}\sqrt{\Lambda_2-\Lambda_1},\qquad k\in\mathbb N.
\]
\end{corollary}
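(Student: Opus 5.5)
The plan is to deduce the eigenvalue inequalities directly from the form inequality \eqref{eq:riemannian-form} of Theorem~\ref{thm:riemannian-general} via the min--max principle, exactly as Corollaries~\ref{cor:convex-eigenvalues} and \ref{cor:C2-eigenvalues} are obtained from their form counterparts. We first check that we are in the regime where the variational principle applies. Since $\Lambda_1\le\Lambda_2\le0<\lambda_1^\Dir(X,g)$, each $\DtN_{\Lambda_j}$ is the self-adjoint operator in $L^2(\partial X)$ associated with the closed, non-negative form $\mathfrak d_{\Lambda_j}$. The common form domain is $H^{1/2}(\partial X)$, and this form domain does not depend on $\Lambda_j$. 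The spectrum is discrete, by the compact embedding $H^{1/2}(\partial X)\hookrightarrow L^2(\partial X)$ on the compact smooth boundary. Hence
\[
\sigma_k^{(\Lambda_j)}(X,g)=\min_{\substack{\mathcal L\subset H^{1/2}(\partial X)\\ \dim\mathcal L=k}}\ \max_{u\in\mathcal L\setminus\{0\}}\frac{\mathfrak d_{\Lambda_j}[u]}{\|u\|^2_{L^2(\partial X)}}.
\]

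For the lower bound we use the left inequality in \eqref{eq:riemannian-form}, namely $\mathfrak d_{\Lambda_2}[u]\le\mathfrak d_{\Lambda_1}[u]$ for every $u\in H^{1/2}(\partial X)$. Alternatively, this follows directly from the Riemannian Dirichlet principle. The Rayleigh quotient of $\mathfrak d_{\Lambda_2}$ is then pointwise dominated by that of $\mathfrak d_{\Lambda_1}$ on the same space of trial subspaces, so taking max over $\mathcal L$ and then min over $\mathcal L$ gives $\sigma_k^{(\Lambda_2)}\le\sigma_k^{(\Lambda_1)}$.

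For the upper bound we use the right inequality, $\mathfrak d_{\Lambda_1}[u]\le\mathfrak d_{\Lambda_2}[u]+\mathcal C_g\sqrt{\Lambda_2-\Lambda_1}\,\|u\|^2_{L^2(\partial X)}$. Let $\mathcal L$ be a $k$-dimensional subspace realising the minimum for $\sigma_k^{(\Lambda_2)}$, for instance the span of the first $k$ eigenfunctions. Dividing by $\|u\|^2$ and maximising over $\mathcal L$ gives $\sigma_k^{(\Lambda_1)}\le\max_{\mathcal L}\mathfrak d_{\Lambda_1}/\|\cdot\|^2\le\sigma_k^{(\Lambda_2)}+\mathcal C_g\sqrt{\Lambda_2-\Lambda_1}$. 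Equivalently, as operators on $L^2(\partial X)$, $\DtN_{\Lambda_1}\le\DtN_{\Lambda_2}+cI$ in the form sense with a common form domain, and eigenvalue monotonicity for form-ordered operators yields the claim.

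There is no genuine obstacle. The only points to verify are routine: that the form domains coincide, so that the min--max classes are identical for both parameters, and that $\mathcal C_g<\infty$. Finiteness of $\mathcal C_g$ amounts to finiteness of $\mathcal M_g$, which holds from the smooth collar of $\partial X$ together with the local semiconcavity of $\rho$ away from the boundary, and of $\mathcal A_g$, which was noted above. Both finiteness facts are implicit in Theorem~\ref{thm:riemannian-general}, which we take as given.
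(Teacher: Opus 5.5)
Your proposal is correct and follows the paper's route: the paper, like its Euclidean corollaries, treats this as an immediate consequence of the form inequality \eqref{eq:riemannian-form} of Theorem~\ref{thm:riemannian-general}, via the min--max principle over the common form domain $H^{1/2}(\partial X)$. Your remarks on discreteness of the spectrum and on finiteness of $\mathcal{C}_g$ are harmless additions that the paper leaves implicit.
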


The sharp constant one is recovered from standard geometric assumptions.

\begin{corollary}\label{cor:riemannian-ricci}
Assume, in addition, that
\[
\operatorname{Ric}_g\geq0\quad\text{in }X^\circ, \qquad
\mathcal{H}_{\partial X}\geq0,
\]
where $\mathcal{H}_{\partial X}$ is the mean curvature with respect to the outward unit normal. Then one may take $\mathcal{M}_{g}=0$ in Theorem~\ref{thm:riemannian-general}. In particular,
\[
0\leq \DtN_{\Lambda_1}-\DtN_{\Lambda_2} \leq\sqrt{\Lambda_2-\Lambda_1}\,I
\]
in the sense of quadratic forms, and
\[
0\leq \sigma_k^{(\Lambda_1)}(X,g)-\sigma_k^{(\Lambda_2)}(X,g) \leq\sqrt{\Lambda_2-\Lambda_1}.
\]
\end{corollary}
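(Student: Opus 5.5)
The plan is to show that under $\operatorname{Ric}_g\ge0$ and $\mathcal H_{\partial X}\ge0$ the distance function satisfies $-\Delta_g\rho\ge0$ in $\mathscr D'(X^\circ)$, i.e.\ $\mathcal M_g=0$ in \eqref{eq:riemannian-lower}. Once this holds, $\mathcal C_g=1+\mathcal M_g\mathcal A_g=1$, and both displayed inequalities follow directly. The form inequality is Theorem~\ref{thm:riemannian-general}; equivalently, it is Remark~\ref{rem:riemannian-theta} with $\mathcal M_g=0$, whose root is $\sqrt{\Lambda_2-\Lambda_1}$. The eigenvalue inequality is Corollary~\ref{cor:riemannian-eigenvalues}, or the min--max principle applied to the form inequality.

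For the key step I would use the Riccati/Laplacian comparison for the distance to the boundary (Heintze--Karcher type). Take a point $x\in X^\circ$ outside the cut locus $\mathrm{Cut}(\partial X)$. Then $x$ lies on a unique minimising unit-speed geodesic $t\mapsto\gamma(t)$ leaving $\partial X$ orthogonally from $\gamma(0)=\pi(x)$, and $\rho$ is smooth near $\gamma((0,\rho(x)])$. Set $h(t):=\Delta_g\rho(\gamma(t))$, the mean curvature of the level set $\{\rho=t\}$ with respect to $\nabla\rho$, up to sign. The Bochner formula applied to $\rho$, using $|\nabla\rho|=1$ and Cauchy--Schwarz $|\nabla^2\rho|^2\ge(\Delta\rho)^2/(d-1)$, gives
\[
h'+\frac{h^2}{d-1}+\operatorname{Ric}(\dot\gamma,\dot\gamma)\le0 .
\]
Since $\nabla\rho=-n$ on $\partial X$, the initial value is $h(0)=-\mathcal H_{\partial X}(\pi(x))\le0$. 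With $\operatorname{Ric}\ge0$ we get $h'\le -h^2/(d-1)\le0$, hence $h(t)\le h(0)\le0$. This gives $-\Delta_g\rho\ge0$ pointwise on $X^\circ\setminus\mathrm{Cut}(\partial X)$.

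The main obstacle is passing from this pointwise bound to the distributional inequality across the cut locus. I would follow the standard Calabi/Cheeger--Gromoll argument. The cut locus is closed and of measure zero, and $X^\circ\setminus\mathrm{Cut}(\partial X)$ is star-shaped along normal geodesics from the boundary. For a nonnegative test function $\varphi$, I would integrate $\rho\,\Delta_g\varphi$ in normal exponential coordinates $(y,t)\in\partial X\times(0,c(y))$, where $c(y)$ is the cut time. Integrating by parts in $t$ along each geodesic produces the interior term $\int\varphi\,\Delta_g\rho$ and, at $t=c(y)$, boundary terms that carry the correct sign because $\partial_t\rho=1$ there and the region is exhausted from the boundary side. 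The singular part of $\Delta_g\rho$ on the cut locus is therefore a nonpositive measure. This is exactly the comparison underlying \cite[Theorem~1.6]{LewisLiLi} in the Euclidean case, whose Riemannian version is classical. Alternatively, I would argue that $\rho$ is a viscosity supersolution, using smooth upper barriers $\rho_{y}(\cdot)=\dist(\cdot,\text{small boundary patch})$ which touch $\rho$ from above and satisfy $-\Delta\rho_y\ge0$ near the contact point by the same Riccati estimate. Viscosity superharmonicity then implies distributional superharmonicity. I expect this regularity bookkeeping near the cut locus to be the only delicate step; everything else is the Riccati inequality and a citation of Theorem~\ref{thm:riemannian-general}.
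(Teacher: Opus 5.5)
Your proposal is correct and follows the paper's route. You show $-\Delta_g\rho\ge0$ in $\mathscr D'(X^\circ)$, so that $\mathcal M_g=0$ and $\mathcal C_g=1$, and then apply Theorem~\ref{thm:riemannian-general} and the min--max principle; the only difference is that the paper simply cites Kasue's Corollary~(2.44) for the superharmonicity, whereas you sketch its proof (the Bochner/Riccati inequality with $h(0)=-\mathcal H_{\partial X}\le0$, then integration in Fermi coordinates up to the cut time, which does give a nonpositive singular part), and that sketch is sound, though your alternative barrier $\dist(\cdot,\text{boundary patch})$ would need Calabi's shift along the geodesic, since it need not be smooth at focal cut points.
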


\begin{proof}
Under these assumptions the boundary distance is superharmonic in the distributional sense,
\[
-\Delta_g\rho\geq0.
\]
This follows from \cite[Corollary~(2.44)]{Kasue}. Thus $\mathcal{M}_{g}=0$ in \eqref{eq:riemannian-lower}, and the result follows from Theorem~\ref{thm:riemannian-general}.
\end{proof}

\begin{remark}
Theorem~\ref{thm:riemannian-general} isolates the actual analytic input: a distributional lower bound for $-\Delta_g\rho$.  Non-negative Ricci curvature and weak mean convexity are a convenient sufficient condition giving the optimal value $\mathcal{M}_{g}=0$, but other geometric hypotheses leading to a  distributional inequality for the Laplacian of the distance function may be used in exactly the same way.
\end{remark}

\subsection{Dirichlet-to-Neumann eigenvalues on  metric graphs}

The preceding argument has a simple analogue for compact metric graphs, which makes the role of the distance function particularly transparent.  Let $\mathcal G$ be a finite connected metric graph, let $V_\Dir$ be a non-empty set of boundary vertices, and impose the standard Kirchhoff conditions at the non-empty set $V_\Neu$ of all remaining vertices. 
For simplicity, assume that every vertex in $V_\Dir$ has degree one.  For $\Lambda\le 0$, let $\DtN^{\mathcal G}_{\Lambda}$ denote the Dirichlet-to-Neumann map corresponding to
\[
-U'' - \Lambda U=0
\]
on the edges, with Dirichlet data prescribed on $V_\Dir$, and write
\[
\sigma_1^{(\Lambda)}(\mathcal G)\leq\cdots\leq \sigma_{|V_\Dir|}^{(\Lambda)}(\mathcal G)
\]
for its eigenvalues.  We refer to \cite{BerKu}, in particular its \S3.5, for more details of spectral theory of metric (quantum) graphs and corresponding Dirichlet-to-Neumann maps.

The natural analogue of Theorem~\ref{thm:convex} for graphs is the quadratic-form comparison
\[
0\leq \DtN^{\mathcal G}_{\Lambda_1}-\DtN^{\mathcal G}_{\Lambda_2} \leq \sqrt{\Lambda_2-\Lambda_1}\,I,
\qquad \Lambda_1\leq\Lambda_2\leq 0.
\]
By the min--max principle, this implies the eigenvalue inequality
\begin{equation}\label{eq:graph-conjB}
0\leq \sigma_k^{(\Lambda_1)}(\mathcal G)-\sigma_k^{(\Lambda_2)}(\mathcal G)
\leq \sqrt{\Lambda_2-\Lambda_1},\qquad k=1,\dots,|V_\Dir|, \qquad \Lambda_1\le\Lambda_2\le 0.
\end{equation}

Set
\[
\rho(x):=\dist(x,V_\Dir).
\]
On each edge, $\rho$ is a concave piecewise affine function with $|\rho'|=1$ almost everywhere; its only possible interior singularity is a downward corner.  Branching, however, introduces an additional vertex contribution.  If $v\in V_\Neu$, write
\[
\Sigma_\rho(v):=\sum_{e\sim v}\partial_e\rho(v),
\]
where $\partial_e$ denotes the derivative away from $v$ along $e$.  If $m_v$ is the number of incident edges which start a shortest path from $v$ to $V_\Dir$, then
\[
\Sigma_\rho(v)=\deg(v)-2m_v.
\]
Thus $\Sigma_\rho(v)\leq0$ precisely when at least half of the edges incident to $v$ point ``downhill'' towards the boundary.  In distributional terms, on the interior of the graph,
\[
\rho''=-2\sum_{x_*}\delta_{x_*}+\sum_{v\in V_\Neu}\Sigma_\rho(v)\,\delta_v,
\]
where $x_*$ runs over the interior downward corners of $\rho$.  Consequently,
\begin{equation}\label{eq:graph-superharmonic}
\Sigma_\rho(v)\leq0\quad\text{for every }v\in V_\Neu
\end{equation}
is exactly the metric-graph counterpart of the superharmonicity condition $-\Delta\rho\geq0$ used in \eqref{eq:convexdistmain}.  Notice that concavity along each individual edge is automatic; the only obstruction comes from the vertex masses created by branching. We stress that \eqref{eq:graph-superharmonic} is a sufficient condition for \eqref{eq:graph-conjB}, not a necessary one.

Under \eqref{eq:graph-superharmonic}, the substitution and quadratic-form argument of \S\ref{sec:proofconvex} carry over to metric graphs almost verbatim.  Indeed, the same substitution \eqref{eq:VU}, $x\in\mathcal{G}$, gives the graph analogue of Proposition~\ref{prop:sub}, and the vertex terms have the correct sign exactly because of \eqref{eq:graph-superharmonic}; the Dirichlet principle is unchanged.  Hence the quadratic-form comparison above, and therefore \eqref{eq:graph-conjB}, hold whenever the boundary vertices are leaves and $\rho$ is superharmonic in the above sense.  This includes, for example, subdivided intervals (more generally, the condition is automatic at every interior vertex of degree at most two), as well as equilateral rooted trees whose Dirichlet leaves all lie at the same depth; in particular it holds for equal-length stars.  The relevant notion here is therefore not geodesic convexity of a subset of a graph, but rather superharmonicity of the distance to the boundary, i.e. a discrete analogue of mean convexity.

The condition \eqref{eq:graph-superharmonic} can fail even for a tree, and so can \eqref{eq:graph-conjB}.  Consider the star graph with three Dirichlet leaves and one Kirchhoff centre, with edge lengths
\[
0.1,\ 5,\ \text{and }10.
\]
At the centre there is a unique edge which starts a shortest path to $V_\Dir$, so that $m_v=1$ and
$\Sigma_\rho(v)=3-2=1>0$.  A direct computation of the boundary Dirichlet-to-Neumann matrix gives the plots shown in Figure \ref{fig:tree} thus producing a counterexample to graph analogues of both Conjectures \ref{conj:A} and \ref{conj:B}.

\begin{figure}[htb]
\centering
\includegraphics{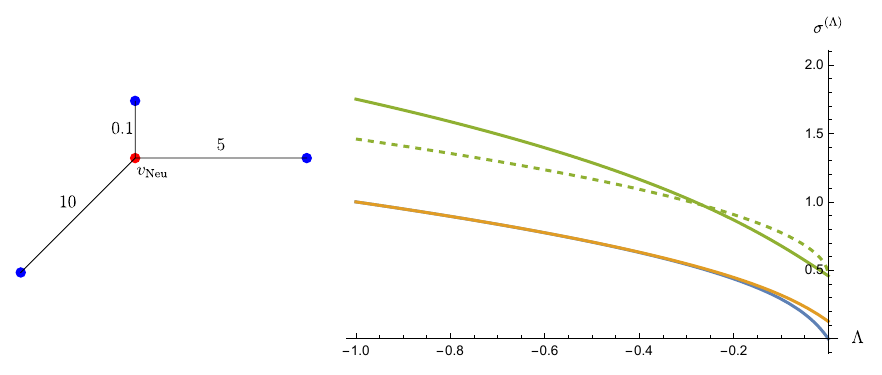}
\caption{On the left, a metric graph (not to scale) with one Neumann vertex and three Dirichlet leaves. On the right, its three Dirichlet-to-Neumann eigenvalues plotted against $\Lambda$ (solid lines) and the curve $\sigma_3^{(0)}+\sqrt{-\Lambda}$ (dashed line).}\label{fig:tree}
\end{figure}

More generally, without assumptions on the degrees of Kirchhoff vertices, the same argument as in the Euclidean case produces the bound 
\[
\sigma_k^{(\Lambda_1)}(\mathcal G) - \sigma_k^{(\Lambda_2)}(\mathcal G) \le  \sqrt{\Lambda_2-\Lambda_1} +\frac{1}{2\er} \sum_{v\in V_\Neu}\frac{\left(\Sigma_\rho(v)\right)_+}{\rho(v)},
\]
for $\Lambda_1\le \Lambda_2\le 0$.

\subsection{Wiggly domains}\label{sec:wiggly} 

The constants $\mathcal K_\Omega$ and $\mathcal C_\Omega$ of Theorems~\ref{thm:C2} and
\ref{thm:main} may degenerate as one leaves the class $C^{1,1}$, even along
real-analytic simply connected domains converging to the disk.  Indeed, given
$\kappa\in(0,1)$ and $p\in\mathbb N$, let $\mathcal W_{p,\kappa}\subset\R^2$ be the domain
bounded by the curve
\begin{equation}\label{eq:wiggly}
\theta\ \longmapsto\ \er^{\ir\theta}+p^{-1-\kappa}\er^{\ir p\theta},
\qquad \theta\in[0,2\pi),
\end{equation}
where we identified $\mathbb{R}^2$ with $\mathbb{C}$, see  Figure \ref{fig:wiggly}. 

\begin{figure}[htb]
\centering
\includegraphics{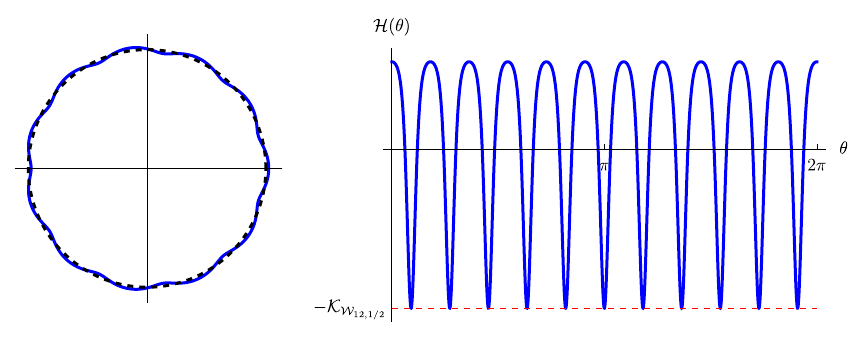}
\caption{On the left, the boundary of the wiggly domain $\mathcal W_{12,1/2}$, see \eqref{eq:wiggly}, together with the unit circle $\mathbb S$ (dashed); the Hausdorff distance between them is $12^{-3/2}\approx0.024$. On the right, the curvature of $\partial\mathcal W_{12,1/2}$, oscillating between $\frac{12}{\sqrt{12}+1}\approx2.68$ and $-\mathcal{K}_{\mathcal W_{12,1/2}}=-\frac{12}{\sqrt{12}-1}\approx-4.87$ (dashed line).}\label{fig:wiggly}
\end{figure}

Elementary explicit computations, which we omit, show that for $p$ large enough
(depending on $\kappa$) this is a Jordan curve; that $\mathcal W_{p,\kappa}\to\mathbb D$ as
$p\to\infty$ in the Hausdorff distance at the rate $p^{-1-\kappa}$ and
$\partial\mathcal W_{p,\kappa}\to\mathbb S$ in $C^{1,\beta}$ for every $\beta<\kappa$, the family
being bounded in $C^{1,\kappa}$ and unbounded in $C^{1,\beta}$ for every $\beta>\kappa$;
and that the curvature of $\partial\mathcal W_{p,\kappa}$ oscillates at the scale $p^{-1}$
with amplitude of order $p^{1-\kappa}$, so that
\[
\mathcal K_{\mathcal W_{p,\kappa}}=p^{1-\kappa}\left(1+O\!\left(p^{-\kappa}\right)\right)
\qquad
\mathcal C_{\mathcal W_{p,\kappa}} \ge  \frac12\,p^{1-\kappa}\left(1+o(1)\right).
\]
Note that since $\sigma_k^{(\Lambda)}\!\left(\mathcal W_{p,\kappa}\right)\to\sigma_k^{(\Lambda)}(\mathbb D)$
for fixed $k$ and $\Lambda$, this is not a counterexample to Conjectures~\ref{conj:A}
and~\ref{conj:B}.

\section*{Acknowledgements}
\phantomsection
\addcontentsline{toc}{section}{Acknowledgements}
The authors would like to thank David Sher, Marco Marletta, Konstantin Pankrashkin,  Gregory Berkolaiko, and Jussi Behrndt for valuable discussions. 

M. Levitin was partially supported by EPSRC grant no. EP/V051881/1 and a Simons Fellowship.

K.-M. Perfekt was supported by grant no. 334466 of the Research Council of Norway,
``Fourier Methods and Multiplicative Analysis.'' 

I. Polterovich was partially supported by supported by NSERC and FRQNT.

The authors are grateful to the Isaac Newton Institute for Mathematical Sciences, Cambridge, for support and hospitality during the programme ``Geometric Spectral Theory and Applications'', where work on this paper was undertaken. This work was supported by EPSRC grant EP/Z000580/1. 

\section*{AI usage disclosure}
\phantomsection
\addcontentsline{toc}{section}{AI usage disclosure}
The authors used different large language models, in particular ChatGPT 5.6 and Claude Opus 5,  extensively for mathematical discussions and editorial assistance. In particular, AI suggested that the geometric properties of the distance function to the boundary should play a key role  in the argument, and  helped constructing  counterexamples. All mathematical proofs, computations, and references were subsequently checked, revised, and written in their final form by the authors.

{\small 

\end{document}